\newtheorem{theorem}{Theorem}[section]
\newtheorem{lemma}[theorem]{Lemma}
\newtheorem{proposition}[theorem]{Proposition}
\theoremstyle{definition}
\newtheorem{definition}[theorem]{Definition}
\newtheorem{remark}[theorem]{Remark}
\newtheorem{example}[theorem]{Example}
\newtheorem{theoremx}{Theorem}
\newtheorem{definitionmx}[theoremx]{Definition}
\begin{document}


\title{The edge code of hypergraphs.}

\author[D. Jaramillo-Velez]{Delio Jaramillo-Velez}
\address{
Department of Electrical Engineering\\
Chalmers University of Technology\\
SE-41296 Gothenburg, Sweden.
}
\email{delio@chalmers.se}

\keywords{Edge codes, hypergraphs, toric codes, next-to-minimal weights, affine
torus, footprint, degree, squarefree evaluation codes, finite field, Gr\"obner basis.}
\subjclass[2020]{Primary 13P25; Secondary  05C65, 14G50, 94B27.}



\begin{abstract}
Given a hypergraph $\mathcal{H}$, we introduce a new class of evaluation toric codes called edge codes derived from $\mathcal{H}$. We analyze these codes, focusing on determining their basic parameters. We provide estimations for the minimum distance, particularly in scenarios involving $d$-uniform clutters. Additionally, we demonstrate that these codes exhibit self-orthogonality. Furthermore, we compute the minimum distances of edge codes for all graphs with five vertices.
\end{abstract}

\dedicatory{Dedicated to Professor Sudhir R. Ghorpade on the occasion of his sixtieth birthday.}

\maketitle


\section{Introduction}\label{intro-section}

The theory of evaluation codes, initially proposed as a generalization of one-point geometric Goppa codes by T. H$\o$holdt, J. van Lint, and R. Pellikaan \cite{Pelikaan-Van}, has been presented through various versions, including affine variety codes, order domain codes, or simply evaluation codes \cite{Jara_Pinto_Villa, Olav, Galindo-Munuera}. In this work, we introduce a new family of evaluation codes utilizing the definition and approach proposed in \cite{Jara_Pinto_Villa}.

Let $S=K[t_1,\ldots,t_s]=\bigoplus_{d=0}^\infty S_d$ denote a polynomial ring over a finite field $K:=\mathbb{F}_q$ with the standard grading. Given a geometric object $X=\{P_1,\ldots,P_m\}$ in the affine space $\mathbb{A}^s:=K^s$, an evaluation code $\mathcal{L}_{X}$ on $X$ is defined as the image of a linear subspace $\mathcal{L}\subseteq S$ under the evaluation map

\begin{equation}\label{square-free_eval}
{\rm ev}\colon \mathcal{L}\rightarrow K^{m},\quad
f\mapsto\left(f(P_1),\ldots,f(P_m)\right).
\end{equation}


To define families of evaluation codes, it is necessary to specify both the linear subspace $\mathcal{L}$ and the geometric object $X$ in the definition (\ref{square-free_eval}). Notable examples include:
\begin{itemize}
\item Affine Cartesian codes of degree $d$: Here, $\mathcal{L}$ is the linear space of polynomials of degree at most $d$, and $X$ is a Cartesian set \cite{Hiram-rent-villa}.
\item Monomial Cartesian codes: These are evaluation codes with $\mathcal{L}$ generated by a set of monomials, while $X$ is a Cartesian set \cite{lopez-Mate-sopro}.
\item Toric codes over a $d$-hypersimplex $\mathcal{P}$: These codes are evaluation codes on the affine torus $T:=(K^*)^s$, where $K^*$ is the multiplicative group of the field $K$. In this case, $\mathcal{L}=KV_d$ is the $K$-linear subspace of $S_d$ spanned by the set $V_d$ of all $t^a:=t_1^{a_1}\dots t_s^{a_s}$ such that $a\in\mathcal{P}\cap\mathbb{Z}^s$ \cite{Jara_Pinto_Villa}, \cite[p. 84]{Sturmfels}.
\item Squarefree evaluation codes: These are toric codes with the space of polynomials generated by squarefree monomials \cite{Jara_Pinto_Villa}.
\end{itemize}

Several strategies have been devised to harness the combinatorial structure of hypergraphs for defining both non-linear and linear codes, as well as for studying their fundamental parameters such as length, dimension, and minimum distance. Inspired by the concept of expander codes \cite{Zemor}, A. Barg and G. Zemor introduced the notion of a ``hypergraph code", where the coordinates of the codewords correspond one-to-one with the edges of a uniform $t$-partite, $\Delta$-regular hypergraph. Similarly, Y. Bilu and S. Hoory utilized the structure of $t$-uniform $t$-partite $\Delta$-regular hypergraphs for efficient decoding of binary codes on hypergraphs \cite{Bilu-Hoory}.

For evaluation codes, graphs have also been employed to define novel families of codes. Given a projective toric set of points $X$ parameterized by the edges of a graph $G$, C. Renter\'ia et al. introduced the concept of a ``parameterized code of order $d$ over $G$," which constitutes affine Cartesian codes on the set $X$ \cite{Neves-Pinto, ren-simis-villa}. Furthermore, based on the incidence matrix of a signed graph $G_{\sigma}$, J. Mart\'inez-Bernal et al. introduced the ``incidence matrix code" as a linear code generated by the row vectors of the incidence matrix of $G_{\sigma}$ \cite{Bernal-Bucio-Villarreal}.


Given a hypergraph $\mathcal{H}$, we introduce a novel family of evaluation codes defined on the affine torus $T$. In contrast to previous research on graphs and evaluation codes, our approach involves parameterizing the set of polynomials by the edges of the hypergraph, while fixing the set of points. Additionally, our methodology extends beyond traditional graphs to encompass hypergraphs, which produce a class of codes that in some ways generalize toric codes over hypersimplices. 


Let $\{e_1,\dots,e_n\}$ represent the set of edges of the hypergraph $\mathcal{H}$ with $s$ vertices. We denote $E(\mathcal{H})=\{f_1,\dots,f_n\}$, where each $f_i:=\prod_{j\in e_i}t_j$ is defined as the product of variables indexed by $j$ belonging to edge $e_i$ in the hypergraph $\mathcal{H}$, thereby indicating the set of monomials generated by its edges.

\begin{definitionmx}

The edge toric code or edge code $C_{\mathcal{H}}$ of $\mathcal{H}$ is defined as the image ${\rm ev}(KE(\mathcal{H}))$ of the $K$-linear space generated by the monomials in $E(\mathcal{H})$, denoted as $KE(\mathcal{H})$, under the evaluation map specified in (\ref{square-free_eval}).

\end{definitionmx}

A clutter $\mathcal{C}$ is defined as a hypergraph in which no edges are contained within other edges. Additionally, $\mathcal{C}$ is termed $d$-uniform if every edge within it has a size of $d$. In our study, we focus on estimating the minimum distance of this new class of codes for specific cases, leveraging the properties of $d$-uniform clutters.

\begin{theoremx}[Theorem \ref{clutter_theorem}]\label{clutter_theorem_intro}
Let $\mathcal{C}$ be an $d$-uniform clutter, and $q\geq 3$. The minimum distance of the edge code $C_{\mathcal{C}}$ is given by 
$$
\delta(C_{\mathcal{C}})=\begin{cases}
(q-2)^d(q-1)^{s-d}&\mbox{ if } d\leq s/2, \mbox{ and } \mathfrak{C}_{d-1}(J_{d\times 2})\subseteq \mathcal{C}\\
(q-2)^{s-d}(q-1)^{d}&\mbox{ if }s/2 < d < s, \mbox{ and } \mathfrak{C}_{s-d-1}(J_{(s-d)\times 2})\subseteq \mathcal{C^*}\\
\end{cases}
$$
\end{theoremx}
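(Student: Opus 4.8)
The plan is to write $\delta(C_{\mathcal{C}}) = (q-1)^s - Z$, where $Z$ is the largest number of zeros on the torus $T=(K^*)^s$ attained by a nonzero codeword $f\in KE(\mathcal{C})$, and to treat the two ranges of $d$ separately, reducing the second to the first. For $s/2<d<s$ I would use a duality on $T$: let $\mathcal{C}^{*}$ be the complementary $(s-d)$-uniform clutter whose edges are the complements $[s]\setminus e_i$, and send $f=\sum_i c_i\prod_{j\in e_i}t_j$ to $f^{*}=\sum_i c_i\prod_{j\in e_i^{*}}t_j$. On $T$ one has the identity $f^{*}(P)=\bigl(\prod_{j}P_j\bigr)\,f(P^{-1})$; since $\prod_j P_j\neq 0$ on $T$ and coordinate-wise inversion $P\mapsto P^{-1}$ is a bijection of $T$, this is a weight-preserving linear isomorphism $C_{\mathcal{C}}\to C_{\mathcal{C}^{*}}$, so $\delta(C_{\mathcal{C}})=\delta(C_{\mathcal{C}^{*}})$. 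As $\mathcal{C}^{*}$ is $(s-d)$-uniform with $s-d<s/2$ and the hypothesis becomes exactly $\mathfrak{C}_{s-d-1}(J_{(s-d)\times 2})\subseteq \mathcal{C}^{*}$, the second formula follows from the first applied to $\mathcal{C}^{*}$. It thus suffices to prove $\delta(C_{\mathcal{C}})=(q-2)^d(q-1)^{s-d}$ when $d\le s/2$.

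For the upper bound $\delta\le(q-2)^d(q-1)^{s-d}$ I would exhibit an explicit low-weight codeword. Fix $c_1,\dots,c_d\in K^{*}$ and set $g=\prod_{i=1}^{d}(t_i-c_i\,t_{d+i})$, which is legitimate because $2d\le s$. Expanding, $g$ is a $K$-combination of the $2^d$ squarefree degree-$d$ monomials obtained by choosing $t_i$ or $t_{d+i}$ for each $i$; these are precisely the edges encoded by $\mathfrak{C}_{d-1}(J_{d\times 2})$, so the hypothesis $\mathfrak{C}_{d-1}(J_{d\times 2})\subseteq\mathcal{C}$ guarantees $g\in KE(\mathcal{C})$. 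On $T$ one has $g(P)\neq 0$ iff $P_i\neq c_iP_{d+i}$ for all $i$; each of the $d$ independent coordinate pairs excludes $(q-1)$ of the $(q-1)^2$ possibilities and the remaining $s-2d$ coordinates are free, so $g$ has exactly $[(q-1)(q-2)]^d(q-1)^{s-2d}=(q-2)^d(q-1)^{s-d}$ nonzeros.

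The heart of the argument is the matching lower bound: every nonzero $f\in KE(\mathcal{C})$ has at least $(q-2)^d(q-1)^{s-d}$ nonzeros on $T$ (write $N(h)$ for the number of points of the ambient torus where $h$ does not vanish). Here I would exploit that $f$ is homogeneous and squarefree, hence multilinear, and argue by induction on $d$ for an arbitrary collection of squarefree degree-$d$ monomials. Choosing a variable $t_1$ occurring in $f$, write $f=t_1g+h$ with $g=\partial f/\partial t_1\neq 0$ and $g,h\in K[t_2,\dots,t_s]$ homogeneous squarefree of degrees $d-1$ and $d$. For each $Q\in(K^{*})^{s-1}$ with $g(Q)\neq 0$ the specialization $f(t_1,Q)$ is a nonconstant affine-linear function of $t_1$, hence nonzero for at least $q-2$ of the $q-1$ admissible values of $t_1$; therefore $N(f)\ge (q-2)\,N(g)$. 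The inductive hypothesis gives $N(g)\ge(q-2)^{d-1}(q-1)^{s-d}$, and the base case $d=0$ (a nonzero constant, $N=(q-1)^s$) is immediate, closing the induction and yielding $N(f)\ge(q-2)^d(q-1)^{s-d}$.

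I expect the main obstacle to be precisely this lower bound, and in particular the recognition that degree alone is insufficient: an Alon--F\"uredi-type estimate for arbitrary degree-$d$ polynomials over the grid $(K^{*})^s$ returns a strictly smaller value (it is minimized by concentrating the degree in one variable, as with $(t_1-a)(t_1-b)\cdots$), so homogeneity and squarefreeness must be used essentially. The affine-linear-in-one-variable decomposition is exactly what converts multilinearity into the clean recursion $N(f)\ge(q-2)\,N(g)$, forcing the exponent of $(q-2)$ to grow in step with $d$. The remaining items to verify carefully are that the $2^d$ monomials of $g$ are indeed the edges supplied by $\mathfrak{C}_{d-1}(J_{d\times 2})$, and that the inversion duality transports the sub-clutter hypothesis correctly, so that the two cases assemble into the stated formula.
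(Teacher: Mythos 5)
Your proposal is correct and follows essentially the same route as the paper: the general zero bound for squarefree polynomials on the torus (which the paper cites as \cite[Proposition 4.3]{Jara_Pinto_Villa} and you reprove via the affine-linear-in-one-variable induction), the extremal codeword $\prod_i(t_{i_{2j-1}}-t_{i_{2j}})$ whose $2^d$ monomials are the partite paths of $J_{d\times 2}$, and the torus-inversion duality with $\mathcal{C}^*$ (the paper's Lemma \ref{aster_code}) to handle $s/2<d<s$. The only cosmetic differences are that you count the nonzeros of the extremal polynomial by independence of the coordinate pairs rather than by inclusion--exclusion, and you supply self-contained proofs of the ingredients the paper imports.
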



Furthermore,  we  compute the minimum distance of the edge code  $C_{\mathcal{H}}$ for a special hypergraph $\mathcal{H}$ that is not a clutter.  

\begin{theoremx}[Theorem \ref{relative_code}]\label{relative_code_intro}
Let  $1\leq d_2\leq d_1$ two integers, and $q\geq 3$. If the hypergraph $\mathcal{H}$ contains all the edges of size $r$ with $d_2\leq r\leq d_1$, the minimum distance of the edge code is given by 
$$
\delta(\mathcal{C}_{d_{1}d_{2}}) = \begin{cases} (q-2)^{d_{1}}(q-1)^{s-d_1}\hbox{ if } d_1+d_2\leq s\\
(q-2)^{s-d_{2}}(q-1)^{d_2}\hbox{ if }d_1+d_2> s.\\
\end{cases}
$$
\end{theoremx}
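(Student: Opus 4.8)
The plan is to establish the statement first in the regime $d_1+d_2\le s$ and then to transfer to the complementary regime $d_1+d_2>s$ by a weight-preserving duality. For the duality, consider the automorphism $\phi$ of $T=(K^*)^s$ given by $\phi(p_1,\dots,p_s)=(p_1^{-1},\dots,p_s^{-1})$. Writing $t_A:=\prod_{j\in A}t_j$ for $A\subseteq\{1,\dots,s\}$, and using that $p_1\cdots p_s$ never vanishes on $T$, one checks the identity of functions on $T$
$$
(t_1\cdots t_s)\cdot (t_A\circ\phi)=t_{\bar A},
$$
where $\bar A$ is the complement. Extending linearly, complementation $A\mapsto\bar A$ is a $K$-linear bijection from the span of squarefree monomials of degrees in $[d_2,d_1]$ onto the span of those of degrees in $[s-d_1,s-d_2]$; since $\phi$ permutes the points of $T$ and multiplication by $t_1\cdots t_s$ is invertible there, it preserves the Hamming weight of every evaluation. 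Hence the two edge codes share a weight distribution and $\delta(\mathcal{C}_{d_1d_2})=\delta(\mathcal{C}_{(s-d_2)(s-d_1)})$. As $d_1+d_2>s$ forces $(s-d_2)+(s-d_1)<s$, the second regime reduces to the first.

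For the lower bound when $d_1+d_2\le s$, I would use the footprint with a graded monomial order $\prec$. Because $q\ge 3$, every squarefree monomial is standard modulo $I(T)=(t_1^{q-1}-1,\dots,t_s^{q-1}-1)$, whose initial ideal is $(t_1^{q-1},\dots,t_s^{q-1})$. For a nonzero $f$ in the code space, $\init(f)=t_B$ is squarefree with $|B|=\deg f\le d_1$, and since $S/I(T)\cong\prod_{P\in T}K$ one has $|V_T(f)|=\dim_K S/(I(T)+(f))$. Bounding this dimension by the standard monomials for the leading terms of the generating set $\{t_1^{q-1}-1,\dots,t_s^{q-1}-1,\,f\}$ yields $|V_T(f)|\le (q-1)^s-(q-2)^{|B|}(q-1)^{s-|B|}$, so the weight of ${\rm ev}(f)$ is at least $(q-2)^{|B|}(q-1)^{s-|B|}$. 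Since $x\mapsto(q-2)^x(q-1)^{s-x}$ is nonincreasing for $q\ge 3$ and $|B|\le d_1$, this is at least $(q-2)^{d_1}(q-1)^{s-d_1}$.

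To match the bound I would exhibit an explicit minimum-weight codeword. Using $d_1+d_2\le s$, pick $d_1+d_2$ distinct variables and set
$$
f=\prod_{i=1}^{d_1-d_2}(t_i-1)\cdot\prod_{j=1}^{d_2}\bigl(t_{a_j}-t_{b_j}\bigr),
$$
the pairs $(a_j,b_j)$ ranging over the remaining $2d_2$ variables. The second factor is homogeneous of degree $d_2$ and the first contributes degrees $0$ through $d_1-d_2$, so every monomial of $f$ is squarefree of degree in $[d_2,d_1]$; thus $f$ lies in the code space. A point of $T$ is a nonzero of $f$ exactly when $p_i\ne 1$ for every shift factor and $p_{a_j}\ne p_{b_j}$ for every difference factor, conditions on disjoint variables. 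Counting gives $(q-2)^{d_1-d_2}\cdot\bigl[(q-1)(q-2)\bigr]^{d_2}\cdot(q-1)^{s-d_1-d_2}=(q-2)^{d_1}(q-1)^{s-d_1}$ nonzeros, which is positive and meets the lower bound, proving the first regime.

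I expect the construction to be the crux: the footprint estimate and the duality are essentially formal, whereas one must find factors that simultaneously realize $d_1$ independent vanishing conditions, keep all monomial degrees inside the window $[d_2,d_1]$, and consume at most $s$ variables. The mixture of $d_1-d_2$ affine shift factors $(t_i-1)$ with $d_2$ homogeneous difference factors $(t_{a_j}-t_{b_j})$ is precisely calibrated to these three demands, and it is exactly here that the hypothesis $d_1+d_2\le s$ is used.
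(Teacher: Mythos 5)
Your proposal is correct and follows essentially the same route as the paper: the lower bound is the squarefree-polynomial zero bound (Proposition \ref{squarefree-affine}, which you re-derive via the footprint and Lemma \ref{apr5-19}), the extremal codeword is the same product of $d_1$ variable-disjoint binomials mixing $d_2$ difference factors with $d_1-d_2$ shift factors, and the reduction of the case $d_1+d_2>s$ to the first case via inversion on the torus and monomial complementation is exactly the content of Lemma \ref{aster_code}. The only difference is that you unpack the two cited ingredients rather than quoting them.
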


Moreover, we establish the self-orthogonality property of edge codes, as demonstrated in Theorem \ref{dual_code}. Additionally, we provide a comprehensive analysis of weight distribution for edge codes of trees, 

\begin{theoremx}[Theorem \ref{theo-weight-dist}]\label{theo-weight-dist-intro}
Let $C_G$ be the edge code of connected tree $G$. For $1\leq t\leq (s-1)/2,$ the $t$-th Hamming weight of $C_G$ is given by
$$
(q-1)^{s}-(q-1)^{s-1}+(q-1)^{s-2}+\cdots+(-1)^{2t+1}(q-1)^{s-(2t-1)}.
$$
\end{theoremx}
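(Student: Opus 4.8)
The statement is about the \emph{next-to-minimal weight hierarchy} in the sense of the ordered list of attained weights: $W_t(s)$ denotes the $t$-th smallest nonzero weight of $C_G$ (for $t=1$ this is the minimum distance $(q-2)(q-1)^{s-1}$). The plan is to pass from weights to numbers of zeros and then exploit the leaf structure of the tree. Since $C_G$ lives on $T=(K^*)^s$, a codeword $\mathrm{ev}(g)$ with $g\in KE(G)$ has weight $(q-1)^s-|Z(g)|$, where $Z(g)$ is the zero set of $g$ in $T$; hence determining the increasing sequence of attained weights is equivalent to determining the increasing sequence of attained zero-counts $|Z(g)|$. I would prove the statement by induction on $s$, the engine being a leaf-deletion formula.

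\textbf{Key Lemma (leaf deletion).} Let $v$ be a leaf of $G$ with unique neighbour $u$, so $t_v$ occurs in exactly one edge monomial $t_ut_v$, and write $g=\lambda\,t_ut_v+h$ with $h\in KE(G-v)$ not involving $t_v$. First I would fix the coordinates of all vertices other than $v$ and count nonzero values of $g$ as $t_v$ ranges over $K^*$: when $\lambda\neq 0$ the map $t_v\mapsto g$ is affine-linear with nonzero leading term $\lambda t_u$, so it vanishes for exactly one $t_v\in K^*$ when $h\neq 0$ at that point and for no such $t_v$ when $h=0$ there. Summing over the $(q-1)^{s-1}$ choices of the remaining coordinates gives
$$\mathrm{wt}(\mathrm{ev}(g))=(q-2)(q-1)^{s-1}+|Z_{s-1}(h)|,\qquad\text{equivalently}\qquad |Z(g)|=(q-1)^{s-1}-|Z_{s-1}(h)|,$$
where $Z_{s-1}(h)$ is the zero set of $h$ in $(K^*)^{s-1}$ and $G-v$ is again a tree, on $s-1$ vertices.

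This single identity yields two interlocking recursions. On one hand, the smallest weights of $C_G$ are $(q-2)(q-1)^{s-1}$ plus the smallest attainable zero-counts of $C_{G-v}$; the minimum $W_1(s)=(q-2)(q-1)^{s-1}$ is reached when $h$ is a monomial. On the other hand, applying the second form of the identity to $C_{G-v}$ shows that the smallest nonzero zero-counts of a tree code on $s'$ vertices equal $(q-1)^{s'-1}$ minus the \emph{largest} zero-counts of a tree code on $s'-1$ vertices, and largest zero-counts correspond, through the weight–zero duality, to the \emph{smallest} weights. Threading these together I expect to obtain
$$W_t(s)=(q-2)(q-1)^{s-1}+W_{t-1}(s-2)=W_1(s)+W_{t-1}(s-2),$$
and a direct check shows the claimed alternating sum solves this with $W_1(s)=(q-2)(q-1)^{s-1}$. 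The hypothesis $t\le(s-1)/2$ is exactly the condition keeping $W_{t-1}(s-2)$ inside the inductively established regime $t-1\le((s-2)-1)/2$, so the induction is self-consistent; the base cases are the small trees. As a sanity check, for a star $G=K_{1,s-1}$ one has $KE(G)=t_0\cdot\langle t_1,\dots,t_{s-1}\rangle$, whose weights are $(q-1)$ times those of linear forms on $(K^*)^{s-1}$, and the classical count $|\{x\in(K^*)^k:\sum x_i=0\}|=\frac{(q-1)^k+(-1)^k(q-1)}{q}$ applied to a form in $k=2t$ variables returns exactly $W_t(s)$.

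Two points will need the most care, and I expect the second to be the real obstacle. First, the recursion must be anchored by explicit codewords of weight $W_t(s)$; these are produced uniformly by lifting through the leaf $v$ an optimal codeword of $C_{G-v}$, even though the extremal word is tree-dependent in shape (a sub-star in $K_{1,s-1}$, a sub-path in a path). Second, and hardest, is exhaustiveness: one must show the recursions capture \emph{all} small weights, i.e.\ that no codeword has weight strictly between consecutive values $W_t(s)$ and $W_{t+1}(s)$ for $t<(s-1)/2$. The delicate case is a codeword whose coefficients vanish on every leaf edge; such a word omits all leaf variables and its weight factors as $(q-1)^{(\#\,\text{leaves})}$ times a weight of a strictly smaller tree code, and one must verify these never fall into a gap. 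Controlling all codewords at once — not merely the lifted ones — while correctly propagating the inductive hypothesis across the weight–zero duality and the range restriction $t\le(s-1)/2$, is where the bulk of the argument lies.
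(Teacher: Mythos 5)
Your leaf-deletion identity is exactly the computation the paper relies on, but you have attached it to the wrong induction, and the hole you flag at the end (exhaustiveness) is a genuine gap rather than a technicality. Because you delete a leaf $v$ of the \emph{graph} $G$ and induct on $s$, you are forced into the case split $\lambda\neq 0$ versus $\lambda=0$, into tracking an ordered hierarchy $W_t(s)$ through a weight/zero-count duality, and into ruling out stray weights coming from codewords supported away from the leaf edge; none of this is carried out, and the recursion $W_t(s)=W_1(s)+W_{t-1}(s-2)$ is only checked against the claimed answer, not derived. As written, the argument does not establish that every codeword's weight lands on the predicted list, which is the whole content of the theorem.

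The paper avoids all of this by inducting on the number $r$ of monomials of the individual codeword $f$ rather than on $s$: since the edges occurring in $f$ span an acyclic subgraph of the tree, some variable $t_i$ occurs in exactly one monomial $t_it_j$ of $f$ (a pendant vertex of the \emph{support of $f$}, not of $G$), and solving $f=0$ for $t_i$ gives $|V_T(f)|=(q-1)^{s-1}-|V_{T^{*}}(h)|$ with $h$ the sum of the remaining $r-1$ monomials. Iterating yields the exact formula $|V_T(f)|=(q-1)^{s-1}-(q-1)^{s-2}+\cdots+(-1)^{r}(q-1)^{s-(r-1)}$ for \emph{every} nonzero $f\in KE(G)$ (Proposition \ref{weights-degree-two}), so the attainable zero counts depend only on $r\in\{1,\dots,s-1\}$ and there is nothing left to exclude: one sorts these finitely many values and checks that the $t$-th largest zero count is the one with $r=2t$, which is where the hypothesis $t\le (s-1)/2$ enters (a tree has only $s-1$ edges). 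If you replace ``leaf of $G$'' by ``variable occurring in exactly one monomial of $f$'', your Key Lemma becomes unconditional, the case $\lambda=0$ never arises, and the exhaustiveness problem disappears.
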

Finally, we compute the minimum distance of the edge code $C_{G}$ for all connected graphs $G$ with five vertices in Table \ref{five_vertices}.

The rest of the paper is organized as follows. In Section \ref{prelimina}, we present the results and definition for the proof of our findings. Section \ref{edge_coding} delves into the computation of weights and basic parameters of edge codes for specific families of hypergraphs. Finally, Section \ref{Examples} presents illustrative examples.

\section{Preliminaries}\label{prelimina}
This section presents basic definitions and results that set the foundation for the further results. Specifically, we revisit the concept of affine Hilbert functions and the notion of the degree of an ideal. These concepts are crucial in determining the maximum number of common zeros of a system of polynomials.

For a more comprehensive understanding of these concepts and results, readers are referred to~\cite{MacWilliams-Sloane, mon-alg}.

Consider the following notation: 
\begin{itemize}
\item {\bf Krull dimension:} Let $I$ be an ideal of $S=K[t_1,\ldots,t_s]$. The Krull dimension of $S/I$ is denoted by $\dim(S/I)$. We say that the ideal $I$ has a dimension $k$ if $\dim(S/I)$ is equal to $k$.
\item {\bf Height:} The height of $I$, denoted and defined by $\text{ht}(I):=s-\dim(S/I)$.
\item {\bf Affine Hilbert function:} The $K$-linear space of polynomials in $S$  (resp. $I$) of degree at most $d$ is denoted by $S_{\leq d}$ (resp. $I_{\leq d}$). The function $H_I^a(d):=\dim_K(S_{\leq d}/I_{\leq d}),\ \ \ d=0,1,2,\ldots$ is called the affine Hilbert function of $S/I$.
\end{itemize}
According to Hilbert's theorem \cite{CR-Mat}, there exists a unique polynomial $h^a_I(z)=\sum_{i=0}^{k}a_iz^i\in \mathbb{Q}[z]$ of degree $k$ such that $h^a_I(d)=H_I^a(d)$ for sufficiently large $d$. We adopt the convention, the degree of the zero polynomial is $-1$. The integer $k!a_k$, denoted $\text{deg}(S/I)$, is referred to as the \textit{degree} of $S/I$. In the case where $k=0$, $H_I^a(d)=\text{deg}(S/I)=\dim_K(S/I)$ for $d\gg 0$. It is worth noting that the degree of $S/I$ is positive if $ I\subsetneq S$, and $0$ otherwise.\\
The \textit{affine variety} of a finite subset $F$ of $S$ with respect to the set of points $X=\{P_1,\ldots,P_m\}$, denoted by $V_X(F)$, is defined as the set of all points $P \in X$ such that $f(P)=0$ for all $f\in F$. The {\it colon ideal} is defined as  $$(I\colon(F)):=\{g\in S\, \vert\, gF\subseteq I \}.$$ The colon ideal is an important tool for determining whether the affine variety $V_X(F)$ is non-empty (see Lemma~\ref{vila-delio-feb27-20}).

Now, we recall some important properties related to the relationship between the degree of a vanishing ideal and the number of elements of an affine variety:

\begin{lemma}{\cite[p.~389]{rubiano}}\label{primdec-ixx} 
Let $X$ be a finite subset of
$\mathbb{A}^s$, let $P$ be a point in $X$, $P=(p_1,\ldots,p_s)$, and
let $I_{P}$ be the vanishing ideal 
of $P$. Then $I_P$ is a maximal ideal of height $s$, 
\begin{equation*}
I_P=(t_1-p_1,\ldots,t_s-p_s),\ \deg(S/I_P)=1, 
\end{equation*}
and $I(X)=\bigcap_{P\in X}I_{P}$ is the primary 
decomposition of $I(X)$.  
\end{lemma}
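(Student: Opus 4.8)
The plan is to verify the four assertions in sequence, starting from the explicit presentation of $I_P$. Write $J:=(t_1-p_1,\ldots,t_s-p_s)$. The inclusion $J\subseteq I_P$ is clear, since every generator $t_i-p_i$ vanishes at $P$. For the reverse inclusion I would show directly that $S/J\cong K$: performing the linear change of variables $u_i:=t_i-p_i$ identifies $J$ with the irrelevant maximal ideal $(u_1,\ldots,u_s)$ of $K[u_1,\ldots,u_s]$, whose quotient is visibly $K$; equivalently, each $f\in S$ reduces modulo $J$ to its constant term $f(P)$. Hence $S/J$ is a field, $J$ is maximal, and from $J\subseteq I_P\subsetneq S$ together with maximality of $J$ we obtain $J=I_P$. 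In particular $I_P$ is a maximal ideal.

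For the height and degree I would exploit $S/I_P\cong K$. As a field it has Krull dimension $0$, so $\Ht(I_P)=s-\dim(S/I_P)=s$. Moreover $\dim_K(S/I_P)=1$, and by the convention recalled just before the lemma (the case $k=0$, where $H^a_{I_P}(d)$ stabilizes to $\deg(S/I_P)=\dim_K(S/I_P)$) this yields $\deg(S/I_P)=1$.

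The identity $I(X)=\bigcap_{P\in X}I_P$ follows by unwinding definitions: a polynomial vanishes on $X$ exactly when it vanishes at each $P\in X$, i.e.\ lies in every $I_P$. To see that this is the (irredundant) primary decomposition, note that each $I_P$ is maximal, hence prime, hence primary, and that distinct points give distinct coordinate vectors, hence distinct, pairwise comaximal maximal ideals. For minimality I would argue that if $I_P\supseteq\bigcap_{Q\neq P}I_Q=\prod_{Q\neq P}I_Q$, then primeness of $I_P$ forces $I_P\supseteq I_Q$ for some $Q\neq P$, contradicting the incomparability of distinct maximal ideals; hence no component is redundant.

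None of the steps presents a deep obstacle, as this is essentially a bookkeeping statement about vanishing ideals of finite point sets. The one place demanding care is the isomorphism $S/J\cong K$, which simultaneously underpins the maximality claim and the degree computation; the cleanest route is the change of variables $u_i=t_i-p_i$, after which everything reduces to the standard fact about the irrelevant maximal ideal of a polynomial ring.
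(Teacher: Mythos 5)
Your argument is correct. Note that the paper does not actually prove this lemma --- it is quoted verbatim with a citation to an external reference --- so there is no in-paper proof to compare against; your write-up is the standard argument (change of variables $u_i=t_i-p_i$ to identify $I_P$ with the irrelevant maximal ideal, $S/I_P\cong K$ giving height $s$ and degree $1$ via the $k=0$ convention, and irredundancy of $\bigcap_{P\in X}I_P$ from incomparability of distinct maximal ideals), and it fills the gap cleanly.
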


\begin{lemma}{\rm(cf. \cite[Lemma~3.3]{bernal_pitones_Villarreal})}\label{apr5-19}
Let $d_1,\ldots,d_s$ be positive integers and let $L$ be the ideal of $S$ generated by
$t_1^{d_1},\ldots,t_{s}^{d_s}$. If $t^a=t_1^{a_1}\cdots t_s^{a_s}$ is
not in $L$, then
$$
\deg(S/(L,t^a))=d_1\cdots d_s-(d_1-a_1)\cdots(d_s-a_{s}).
$$
\end{lemma}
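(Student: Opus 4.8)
The plan is to reduce the degree computation to a count of standard monomials, exploiting the fact that both $L$ and $(L,t^a)$ are zero-dimensional monomial ideals. First I would observe that since $L=(t_1^{d_1},\ldots,t_s^{d_s})$ contains a pure power of every variable, the quotient $S/L$ has Krull dimension $0$; the same holds for $S/(L,t^a)$, which is a further quotient of $S/L$. For a zero-dimensional quotient the degree coincides with the $K$-vector space dimension, exactly as recorded in the preliminaries (the case $k=0$ of the Hilbert polynomial discussion, where $\deg(S/I)=\dim_K(S/I)$). Hence it suffices to compute $\dim_K\big(S/(L,t^a)\big)$.

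Next I would set up the short exact sequence of $K$-vector spaces
\[
0 \longrightarrow S/(L:t^a) \xrightarrow{\ \cdot\, t^a\ } S/L \longrightarrow S/(L,t^a) \longrightarrow 0,
\]
in which multiplication by $t^a$ is injective on $S/(L:t^a)$ precisely by the definition of the colon ideal, and its image is $(L,t^a)/L$, with cokernel $S/(L,t^a)$. Taking dimensions gives $\dim_K\big(S/(L,t^a)\big)=\dim_K(S/L)-\dim_K\big(S/(L:t^a)\big)$, so the whole problem comes down to identifying the two outer quotients. The first is immediate: the standard monomials modulo $L$ are exactly the $t^b$ with $0\le b_i<d_i$ for each $i$, so $\dim_K(S/L)=d_1\cdots d_s$.

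The step that requires the most care, and that I expect to be the only genuine point of the argument, is the colon ideal $(L:t^a)$. The hypothesis $t^a\notin L$ says that $t^a$ is divisible by no $t_i^{d_i}$, hence $a_i<d_i$ for all $i$. I would then verify that $(L:t^a)=(t_1^{d_1-a_1},\ldots,t_s^{d_s-a_s})$: a monomial $t^c$ lies in $(L:t^a)$ iff $t^{a+c}$ is divisible by some $t_i^{d_i}$, i.e.\ iff $c_i\ge d_i-a_i$ for some $i$, and the inequalities $d_i-a_i>0$ are what guarantee these generators are honest positive pure powers. This is again a zero-dimensional monomial ideal with all exponents positive, so $\dim_K\big(S/(L:t^a)\big)=(d_1-a_1)\cdots(d_s-a_s)$. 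Substituting into the dimension formula above yields
\[
\deg\big(S/(L,t^a)\big)=d_1\cdots d_s-(d_1-a_1)\cdots(d_s-a_s),
\]
as claimed. (Alternatively, one can bypass the exact sequence and count directly: the standard monomials killed when passing from $S/L$ to $S/(L,t^a)$ are exactly those $t^b$ with $a_i\le b_i<d_i$ for all $i$, of which there are $\prod_i(d_i-a_i)$, giving the same answer.)
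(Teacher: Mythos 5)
Your proof is correct. The paper itself states this lemma without proof, citing \cite[Lemma~3.3]{bernal_pitones_Villarreal}, so there is no in-paper argument to compare against; your derivation---reducing $\deg$ to $\dim_K$ for the Artinian quotients, the exact sequence $0\to S/(L:t^a)\xrightarrow{\cdot t^a} S/L\to S/(L,t^a)\to 0$, and the identification $(L:t^a)=(t_1^{d_1-a_1},\ldots,t_s^{d_s-a_s})$ using $a_i<d_i$---is the standard route and is complete, and your parenthetical direct count of the standard monomials $t^b$ with $a_i\le b_i<d_i$ is exactly the argument given in the cited source.
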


\begin{lemma}{\rm \cite[Lemma 2.5]{Jara_Pinto_Villa}}\label{vila-delio-feb27-20} Let $X$ be a finite subset of 
$\mathbb{A}^s$ over a field $K$ and let $F=\{f_1,\ldots,f_r\}$ be a
set of polynomials of $S$. Then, the following conditions are equivalent.
\begin{enumerate}
\item[(a)] $(I(X)\colon(F))=I(X)$.
\item[(b)] $V_X(F)=\emptyset$.
\item[(c)] $(I(X),F)=S$.
\end{enumerate}
\end{lemma}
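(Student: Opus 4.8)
The plan is to reduce the entire statement to the structure of the Artinian quotient $\bar S := S/I(X)$, which by Lemma~\ref{primdec-ixx} and the Chinese Remainder Theorem is a finite product of copies of the base field. Indeed, Lemma~\ref{primdec-ixx} gives $I(X)=\bigcap_{P\in X} I_P$ with the $I_P$ pairwise distinct maximal ideals (hence pairwise comaximal), so CRT yields a ring isomorphism $\bar S \cong \prod_{P\in X}S/I_P \cong \prod_{P\in X}K$ induced by the evaluation map $g\mapsto (g(P))_{P\in X}$. I will write $\bar g=(g(P))_{P\in X}$ and $\bar f_i=(f_i(P))_{P\in X}$ for the images. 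The whole lemma then becomes a statement about ideals and colon ideals in a finite product of fields, where membership is decided coordinate by coordinate and ``$f_i\in I_P$'' simply means ``$f_i(P)=0$''.

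For the equivalence (b)$\Leftrightarrow$(c), I would pass to $\bar S$: since $(I(X),F)=S$ if and only if the images $\bar f_1,\dots,\bar f_r$ generate the unit ideal of $\bar S\cong\prod_{P}K$, and in a finite product of fields a family generates the unit ideal exactly when in each coordinate $P$ at least one member is nonzero, condition (c) is equivalent to: for every $P\in X$ there is an $i$ with $f_i(P)\neq 0$. Its negation is precisely the existence of a point $P\in X$ with $f_i(P)=0$ for all $i$, i.e. $V_X(F)\neq\emptyset$; this gives (c)$\Leftrightarrow$(b).

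For (a)$\Leftrightarrow$(b) I would use that $(I(X)\colon(F))$ always contains $I(X)$, and that modulo $I(X)$ it equals the annihilator $(0:_{\bar S}\bar F)=\{\bar g:\bar g\,\bar f_i=0\ \forall i\}$, because $gf_i\in I(X)$ is the same as $\bar g\,\bar f_i=0$. Computing this annihilator coordinatewise in $\prod_P K$, a tuple $(c_P)_P$ kills all $\bar f_i$ iff $c_P=0$ whenever some $f_i(P)\neq 0$; hence the annihilator is $\prod_{P\in V_X(F)}K$. Therefore $(0:_{\bar S}\bar F)=0$---equivalently $(I(X)\colon(F))=I(X)$, which is (a)---holds exactly when $V_X(F)=\emptyset$, which is (b).

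I do not expect a genuine obstacle here; the content is the translation to $\bar S$. The one point that must be handled with care is that all three conditions refer only to the finite set $X$ (not to $\mathbb{A}^s$ or an algebraic closure), which is exactly what makes $\bar S$ a finite product of fields and lets the coordinatewise arguments go through; establishing the CRT isomorphism cleanly from Lemma~\ref{primdec-ixx} (comaximality of the distinct $I_P$) is the only step that needs explicit justification. Everything else---membership in the generated ideal, the colon/annihilator computation, and reading off $V_X(F)$---is then elementary linear algebra in $\prod_{P\in X}K$.
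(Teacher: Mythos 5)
Your proof is correct, but note that this paper does not actually prove the lemma at all: it is quoted verbatim from the reference \cite{Jara_Pinto_Villa} (Lemma~2.5 there), so there is no in-paper argument to match. Your route is a clean, self-contained alternative to the one in the cited source. There, the argument stays at the level of ideals of $S$: starting from the irredundant primary decomposition $I(X)=\bigcap_{P\in X}I_P$ of Lemma~\ref{primdec-ixx}, one uses that colon ideals distribute over intersections, $\bigl(I(X)\colon(F)\bigr)=\bigcap_{P\in X}\bigl(I_P\colon(F)\bigr)$, and that $\bigl(I_P\colon(F)\bigr)$ equals $I_P$ when some $f_i(P)\neq 0$ (as $I_P$ is prime) and equals $S$ when $F\subseteq I_P$; hence $\bigl(I(X)\colon(F)\bigr)=\bigcap_{P\notin V_X(F)}I_P$, and irredundancy of the decomposition gives (a)$\Leftrightarrow$(b), while (b)$\Leftrightarrow$(c) follows because the maximal ideals containing $(I(X),F)$ are exactly the $I_P$ with $P\in V_X(F)$. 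Your version packages all of this into the single ring isomorphism $S/I(X)\cong\prod_{P\in X}K$ given by evaluation, after which both the colon (as an annihilator $\bigl(0:_{\bar S}\bar F\bigr)=\prod_{P\in V_X(F)}K$) and the unit-ideal condition are read off coordinatewise; this buys transparency and makes the finiteness of $X$ visibly the operative hypothesis, at the modest cost of invoking CRT, whose applicability you correctly ground in the pairwise comaximality of the distinct maximal ideals $I_P$ from Lemma~\ref{primdec-ixx}. The two arguments are equivalent in substance---your coordinatewise computations are exactly the componentwise colon computations of the cited proof transported through the isomorphism---and I see no gap in yours; the only step you should spell out in a final write-up, as you yourself flag, is that evaluation induces the isomorphism (i.e., $I(X)$ is precisely the kernel of $g\mapsto(g(P))_{P\in X}$ and surjectivity comes from CRT).
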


%

\begin{theorem}{\rm \cite[Theorem 2.12]{Jara_Pinto_Villa}}\label{degree-initial-theorem}
Let $X$ be a finite subset of $\mathbb{A}^{s}$, let $I=I(X)$ be the vanishing ideal of $X$, and let $\prec$ be a monomial order. If $F$ is a finite set of polynomials of $S$ and $(I:(F))\neq I$, then
$$
|V_{X}(F)|={\rm deg}(S/\left( I,F\right) )\leq{\rm deg}(S/\left( {\rm in}_{\prec}(I),{\rm in}_{\prec}(F)\right) )\leq {\rm deg}(S/I )=|X|,
$$
and ${\rm deg}(S/\left( I,F\right) )<{\rm deg}(S/I )$ if $(F)\not\subseteq I$.
\end{theorem}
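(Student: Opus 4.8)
The plan is to reduce everything to two ingredients: the concrete description of the coordinate ring $S/I$ as a product of copies of $K$, and the invariance of the affine Hilbert function (hence of the degree) under passage to initial ideals. Throughout I use that the hypothesis $(I:(F))\neq I$ forces $V_X(F)\neq\emptyset$ by Lemma~\ref{vila-delio-feb27-20}, so that $(I,F)\neq S$ and all quotients below are proper.

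First I would establish the two outer equalities. Since $X=\{P_1,\dots,P_m\}$ is a finite set of distinct points, the evaluation map induces a $K$-algebra isomorphism $S/I(X)\xrightarrow{\sim}K^m$, $\bar f\mapsto (f(P_1),\dots,f(P_m))$; in particular $\dim_K(S/I)=m=|X|$, and since $I$ is zero-dimensional this yields $\deg(S/I)=|X|$ (consistent with Lemma~\ref{primdec-ixx} and additivity of degree over the primary decomposition $I=\bigcap_P I_P$). Transporting $F$ across this isomorphism, the image of $(I,F)/I$ is the ideal of $K^m$ generated by the vectors $(f_j(P_1),\dots,f_j(P_m))$. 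Ideals of the product ring $K^m$ are exactly the coordinate ideals $\{v:v_i=0\text{ for }i\in A\}$, and here the coordinate $i$ survives precisely when some $f_j(P_i)\neq 0$, that is, when $P_i\notin V_X(F)$. Hence $S/(I,F)\cong K^{|V_X(F)|}$, so $(I,F)$ is again zero-dimensional and $\deg(S/(I,F))=\dim_K(S/(I,F))=|V_X(F)|$, giving the leftmost equality. If moreover $(F)\not\subseteq I$, then some $f_j$ does not vanish on all of $X$, so $V_X(F)\subsetneq X$ and $|V_X(F)|<|X|$; this is exactly the final strict inequality.

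Next I would prove the middle chain by comparing each ideal with its initial ideal. The degree is unchanged under Gr\"obner degeneration, so $\deg(S/(I,F))=\deg(S/{\rm in}_\prec(I,F))$ and $\deg(S/I)=\deg(S/{\rm in}_\prec(I))$. The crucial containment is $({\rm in}_\prec(I),{\rm in}_\prec(F))\subseteq {\rm in}_\prec(I,F)$, which holds because $I\subseteq (I,F)$ gives ${\rm in}_\prec(I)\subseteq {\rm in}_\prec(I,F)$, while each $f_j\in(I,F)$ contributes ${\rm in}_\prec(f_j)\in {\rm in}_\prec(I,F)$. Every ideal in sight contains the zero-dimensional ideal ${\rm in}_\prec(I)$ (or $I$), hence is itself zero-dimensional, so its degree equals the $K$-dimension of the quotient; and an inclusion $J_1\subseteq J_2$ makes $S/J_2$ a quotient of $S/J_1$, so $\dim_K(S/J_2)\leq\dim_K(S/J_1)$. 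Applying this to ${\rm in}_\prec(I)\subseteq ({\rm in}_\prec(I),{\rm in}_\prec(F))\subseteq {\rm in}_\prec(I,F)$ produces
$$
\deg(S/(I,F))=\deg(S/{\rm in}_\prec(I,F))\leq \deg(S/({\rm in}_\prec(I),{\rm in}_\prec(F)))\leq \deg(S/{\rm in}_\prec(I))=\deg(S/I),
$$
which is precisely the asserted chain.

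I expect the main obstacle to be the first step rather than the degenerations: one must argue cleanly that $\deg(S/(I,F))$ genuinely \emph{counts} points, with no hidden local multiplicities. The product-ring description of $S/I$ settles this at once, since every quotient of $K^m$ is again a product of copies of $K$, hence reduced with degree equal to its number of factors; the alternative of proving $(I,F)$ radical directly and then invoking Lemma~\ref{primdec-ixx} is messier. The remaining work is bookkeeping: verifying zero-dimensionality throughout so that ``degree'' may be freely replaced by ``$\dim_K$'', which is what makes monotonicity of degree under inclusion immediate.
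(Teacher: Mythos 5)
The paper never proves this statement: it is imported verbatim as \cite[Theorem 2.12]{Jara_Pinto_Villa}, so there is no internal proof to compare against. Your argument is correct and complete, and it follows the standard route of the cited source: the isomorphism $S/I(X)\cong K^{|X|}$ identifies $S/(I,F)$ with $K^{|V_X(F)|}$ (so the degree genuinely counts points, and $(F)\not\subseteq I$ forces the strict drop), while the containment $({\rm in}_{\prec}(I),{\rm in}_{\prec}(F))\subseteq {\rm in}_{\prec}((I,F))$ together with monotonicity of $\dim_K$ under inclusion of zero-dimensional ideals gives the middle chain. One small point worth tightening: the blanket claim that ``the degree is unchanged under Gr\"obner degeneration'' requires care for the \emph{affine} Hilbert function when $\prec$ is not degree-compatible; here you can bypass the issue entirely, since every ideal in sight is zero-dimensional, so $\deg(S/J)=\dim_K(S/J)=|\Delta_\prec(J)|=\dim_K(S/{\rm in}_{\prec}(J))$ follows directly from the footprint basis theorem that the paper itself quotes.
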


We retain the notions of standard monomials, footprint, and Gr\"obner basis, which play a fundamental role in the development of our results.

Let $\prec$ be a monomial order, and ${\rm in}_\prec(I)$ denote the initial ideal of $I$. A monomial $t^{a}=t_1^{a_1}\cdots t_s^{a_s}$ is called standard if $t^{a}\not\in{\rm in}_\prec(I)$. The set of standard monomials of $S/I$, denoted by $\Delta_\prec(I)$, is called the footprint of $I$.

The image of $\Delta_\prec(I)$ under the canonical map $S\mapsto S/I$, $x\mapsto \overline{x}$, forms a basis of $S/I$ as a $K$-vector space \cite[Proposition~6.52]{Becker-Weispfenning}. Consequently, $H_{I}^{a}(d)$ represents the number of standard monomials of $S/I$ of degree at most $d$.

For a set of polynomials $F\subseteq S$, we define ${\rm in}_{\prec}(F):=\lbrace {\rm in}_{\prec}(f): f\in F\rbrace$.

Finally, a subset $\mathcal{G}=\{g_1,\ldots, g_n\}$ of $I$ is called a {\it Gr\"obner basis} of $I$ if ${\rm in}_\prec(I)=({\rm in}_\prec(g_1),\ldots,{\rm in}_\prec(g_n))$.

We turn our attention to the notion of weights of an $[m,k]$-{\it linear
code} $C\subseteq K^m$ of {\it length} $m$ and {\it dimension} $k=\dim_K(C)$. Given a $r$-subcode $D$ of $C$ (that is, $D$ is an $r$ dimensional linear subspace of $C$), the {\it support\/} $\chi(D)$ of $D$ is the set   
$$
\chi(D):=\{i\,\vert\, \exists\, (a_1,\ldots,a_m)\in D,\, a_i\neq 0\}.
$$


\quad The Hamming weight, or just weight, of an element $c\in C$ is the number of non-zero coordinates in $c$, or equivalent to the cardinality, $|\chi(\langle c\rangle)|$ of the support of the linear subspace generated by the codeword $c$. Let $A_i(C)$ be number of codewords of weight $i$  in $C$. The list $A_i$ for $1\leq i\leq m$  is called the weight distribution of $C$. The elements $i>0$ for which $A_i(C)\neq 0$ are called the {\it Hamming weights} or {\it next-to-weights} of $C$. The {\it minimum distance} is the first non-zero Hamming weight of $C$, and it is denoted by $\delta(C)$.

\begin{remark}\label{chi_zeros}
The mininum distance of an evaluation code has nice interpretation in terms of the number elements of an affine variety. If $D$ is a linear subspace of an evaluation code $\mathcal{L}_{X}$, then $D={\rm ev}(\mathcal{L}')$, where $\mathcal{L}'$ is a linear subspace of $\mathcal{L}$ with basis $\mathcal{B}=\{f_1,\dots,f_r\}$. Thus, 
\begin{align*}
\chi(D)=\{i\,\vert\, \exists\,f\in\mathcal{L}', f(P_i)\neq 0\},
\end{align*}
with $|\chi(D)|=|X\setminus V_{T}(\mathcal{B})|$. This gives an alternative description for the minimum distance of the evaluation code $\mathcal{L}_X$, 
\begin{align*}
\delta(\mathcal{L}_X)&=\min_{f\in\mathcal{L}}|X\setminus V_{T}(f)|\\
&=|X|-\max_{f\in\mathcal{L}}|V_{X}(f)|.
\end{align*}
Then, to compute the minimum distance of an evaluation codes it is sufficient to find a sharp bound to maximum number of zeros of a polynomial $f\in\mathcal{L}$ \cite{Sudhir_one, Sudhir_three}.
\end{remark}

In the rest of the paper, by \textit{basic parameters} of the linear code $\mathcal{L}_X$, we refer to the following numerical values:
\begin{itemize}
\item[(a)] \textit{length}: $|X|$,
\item[(b)] \textit{dimension}: $\dim_K(\mathcal{L}_X)$, and 
\item[(c)] \textit{Minimum distance}: $\delta(\mathcal{L}_X)$. 
\end{itemize}

\section{Minimum distance of edge codes}\label{edge_coding}
This section is devoted to present the computation of the basic parameters of edge codes for some special families of hypergraphs. In particular, we compute the minimum distance in the case of $d$-uniform clutters, and a weight distribution for the edge code of tree graphs. Additionally, we prove that edge codes are self-ortogonal.

Let $\mathcal{H}=(V,E)$ be a hypergraph, where $V=\{1,\dots,s\}$ is the set of vertices and $E=\{e_1,\dots,e_m\}$ is the set of edges. We start by computing the two first basic parameters of an edge code on $\mathcal{H}$, which are the length, and the dimension of the code. 

\begin{proposition}
Given a hypergraph $\mathcal{H}$ on $s$ vertices and set of edges $E$. The edge code $C_{\mathcal{H}}$ has length $(q-1)^s$ and dimension $|E|$. 
\end{proposition}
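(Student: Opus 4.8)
The plan is to handle the two parameters separately: the length is immediate, while the dimension reduces to an injectivity statement for the evaluation map. For the length, recall that the geometric object underlying an edge code is the affine torus $T=(K^*)^s$, so the length is simply $|T|=|K^*|^s=(q-1)^s$, since $K=\mathbb{F}_q$ has exactly $q-1$ nonzero elements.

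For the dimension, I would first write $\dim_K C_{\mathcal{H}}=\dim_K KE(\mathcal{H})-\dim_K\bigl(KE(\mathcal{H})\cap I(T)\bigr)$, where $I(T)$ is the vanishing ideal of $T$ and $KE(\mathcal{H})\cap I(T)$ is precisely the kernel of ${\rm ev}$ restricted to $KE(\mathcal{H})$. Since the edges of $\mathcal{H}$ are distinct subsets of $\{1,\dots,s\}$ and each $f_i=\prod_{j\in e_i}t_j$ is squarefree, the assignment $e_i\mapsto f_i$ is injective and the $f_i$ are pairwise distinct monomials, hence $K$-linearly independent in $S$; this gives $\dim_K KE(\mathcal{H})=|E|$. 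It then remains to prove that ${\rm ev}$ is injective on $KE(\mathcal{H})$, that is, $KE(\mathcal{H})\cap I(T)=0$.

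The heart of the argument, and the only nontrivial step, is this injectivity. The vanishing ideal of the torus is $I(T)=(t_1^{q-1}-1,\dots,t_s^{q-1}-1)$, and these generators form a Gr\"obner basis for any monomial order $\prec$ (their leading terms lie in distinct variables, so Buchberger's criterion applies), whence ${\rm in}_{\prec}(I(T))=(t_1^{q-1},\dots,t_s^{q-1})$ and the footprint $\Delta_{\prec}(I(T))$ consists of all monomials $t^a$ with $0\leq a_i\leq q-2$ for every $i$. Because each edge monomial is squarefree, its exponent vector has entries in $\{0,1\}$, so (using $q\geq 3$, so that $q-2\geq 1$) every $f_i$ is a standard monomial of $S/I(T)$. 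Standard monomials map to a $K$-basis of $S/I(T)$ under the canonical surjection, so the distinct edge monomials are linearly independent modulo $I(T)$; equivalently, their images under ${\rm ev}$ are linearly independent in $K^{(q-1)^s}$. Hence ${\rm ev}|_{KE(\mathcal{H})}$ is injective and $\dim_K C_{\mathcal{H}}=|E|$. The main obstacle is thus identifying the footprint of $I(T)$ and checking that squarefree monomials lie inside it; once this is in place the conclusion is forced.
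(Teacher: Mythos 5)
Your proposal is correct, and it follows the same outline as the paper (length equals the number of torus points; dimension equals $|E|$ via injectivity of the evaluation map), but it differs in one substantive way: the paper simply asserts that the evaluation map is injective on $KE(\mathcal{H})$, whereas you actually prove it. Your route --- identifying $I(T)=(t_1^{q-1}-1,\dots,t_s^{q-1}-1)$ as a Gr\"obner basis with pairwise coprime leading terms, reading off the footprint $\Delta_{\prec}(I(T))=\{t^a : 0\leq a_i\leq q-2\}$, and observing that squarefree monomials are standard and hence linearly independent modulo $I(T)$ --- is exactly the missing justification, and it is consistent with the footprint machinery the paper sets up in its preliminaries. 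Your parenthetical ``using $q\geq 3$'' is not a cosmetic remark: for $q=2$ the torus is the single point $(1,\dots,1)$, every edge monomial evaluates to $1$, and the dimension collapses to $1$, so the proposition as stated genuinely requires $q\geq 3$ even though neither the statement nor the paper's proof records this hypothesis. In short, your argument buys a complete proof of the injectivity claim and makes explicit a hypothesis the paper leaves implicit.
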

\begin{proof}
These computations are derived from the definition of an evaluation. The length of an evaluation code is given by the number of points, which in our case it is the number of points on the affine torus $(q-1)^s$. The evaluation map on (\ref{square-free_eval})
is injective, so the dimension of the code is equal to the dimension of the linear space generated by the edges of the hypergraph, which is equal to $|E|$.
\end{proof}

Now, we leverage from the following nice propertive of the torus to definite a type ``edge-complement code" of an edge code. The affine torus $T$ on $\mathbb{A}^{s}$ is a group under component-wise multiplication and the map $\sigma\colon{T}\rightarrow{T}$, $[(x_1,\ldots,x_s)]\mapsto [(x_1^{-1},\ldots,x_s^{-1})]$ is a group isomorphism. Setting $Q_i:=\sigma(P_i)$ for $i=1,\ldots,m$, we can write 
$$
{T}=\{P_1,\ldots,P_m\}=\{Q_1,\ldots,Q_m\}.
$$

The {\it Edge-removed hypergraph} $\mathcal{H}^*$ is a hypergraph with the same set of vertices $V$, but the edges are formed by removing each edge $e_i$ from the vertex set $V$, i.e. $E'=\{V\setminus e_1,\dots,V\setminus e_m\}$. The {\it Edge-removed code} $C_{\mathcal{H}^{*}}$ of a hypergraph $\mathcal{H}$ is the edge code of $\mathcal{H}^*$  on ${T}=\{Q_1,\ldots,Q_m\}$, i.e., it is the image of the evaluation map
\begin{equation}\label{square_removed_eval}
{\rm ev}\colon KE(\mathcal{H}^*)\rightarrow K^{m},\quad
f\mapsto\left(f(Q_1),\ldots,f(Q_m)\right).
\end{equation}
The following the lemma establish the relation between and edge code and its edge-removed code. This becomes important for our purposes because we can reduce some computation to the case of the edge-removed code.

\begin{lemma}\label{aster_code}
The codes $C_{\mathcal{H}}$ and $C_{\mathcal{H}^{*}}$ have the same basic parameters.
\end{lemma}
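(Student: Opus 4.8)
The plan is to show that the two edge codes are related by a monomial coordinate change on the torus, so that length, dimension, and minimum distance all coincide. The key observation is the group isomorphism $\sigma\colon T\to T$, $[(x_1,\ldots,x_s)]\mapsto[(x_1^{-1},\ldots,x_s^{-1})]$, which lets us re-index $T=\{P_1,\ldots,P_m\}=\{Q_1,\ldots,Q_m\}$ with $Q_i=\sigma(P_i)$. For each edge $e_i$ with $f_i=\prod_{j\in e_i}t_j$, I would relate the evaluation of the complementary monomial $g_i=\prod_{j\in V\setminus e_i}t_j$ at the point $Q_i$ to the evaluation of $f_i$ at $P_i$. Since $Q_i=\sigma(P_i)$ inverts every coordinate, and the full product $\prod_{j\in V}t_j$ evaluated at any torus point is a nonzero scalar, one gets a clean multiplicative identity of the shape $g_i(\sigma(P))=\left(\prod_{j\in V}P_j\right)^{-1}f_i(P)$ for every $P\in T$.

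Next I would package this into a single linear isomorphism $K^m\to K^m$ witnessing that the two codes are monomially equivalent. Concretely, the map $\phi$ that relabels the coordinate at $P$ to the coordinate at $\sigma(P)$ and rescales by the invertible diagonal factor coming from $\left(\prod_{j\in V}P_j\right)^{-1}$ is an invertible linear (indeed monomial) transformation of $K^m$. The identity from the first paragraph shows that $\phi$ carries $\mathrm{ev}(f_i)$, the generator of $C_{\mathcal{H}}$ indexed by $e_i$, to $\mathrm{ev}(g_i)$, the corresponding generator of $C_{\mathcal{H}^*}$, where the evaluation in the target uses the points $\{Q_1,\ldots,Q_m\}$ as in (\ref{square_removed_eval}). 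Hence $\phi(C_{\mathcal{H}})=C_{\mathcal{H}^*}$.

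Because $\phi$ is a bijective monomial map (a coordinate permutation followed by a diagonal scaling by nonzero scalars), it preserves length $m=(q-1)^s$, it is a linear isomorphism so it preserves dimension, and it preserves the Hamming weight of every codeword since rescaling a coordinate by a nonzero scalar cannot turn a nonzero entry into zero nor vice versa. Therefore $C_{\mathcal{H}}$ and $C_{\mathcal{H}^*}$ have the same length, the same dimension $|E|$, and the same weight distribution, and in particular the same minimum distance; this is exactly the claim.

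The main obstacle, and the step deserving the most care, is verifying the evaluation identity $g_i(\sigma(P))=\left(\prod_{j\in V}P_j\right)^{-1}f_i(P)$ cleanly and confirming that the global factor $\prod_{j\in V}P_j$ is invertible and can be absorbed into a well-defined diagonal rescaling of the coordinates indexed by the points. One must check that this factor depends only on the point $P$ (not on the edge $e_i$), so that it really defines a single diagonal map rather than an edge-dependent fudge; since $\prod_{j\in V}P_j$ is the evaluation of the fixed squarefree monomial $\prod_{j\in V}t_j$, this is immediate, and on the torus every $P_j\in K^*$ so the factor is a nonzero scalar. Once this bookkeeping is in place, the invariance of the basic parameters under a monomial equivalence is routine.
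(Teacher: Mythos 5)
Your proposal is correct and follows essentially the same route as the paper: the identity $g_i(\sigma(P))=\bigl(\prod_{j\in V}P_j\bigr)^{-1}f_i(P)$ is exactly the paper's equation $p_{k,1}\cdots p_{k,s}f^*(Q_k)=f(P_k)$, and your monomial map $\phi$ is the right-hand vertical isomorphism in the paper's commutative diagram. The only (harmless) difference is that you make the monomial-equivalence conclusion explicit, which yields the slightly stronger statement that the two codes have the same weight distribution, whereas the paper only records the equality of minimum distances.
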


\begin{proof}
Both codes are define over the torus, and the hypergraphs $\mathcal{H}$, and $\mathcal{H}^{*}$ have the same amount of edges, then the codes $C_{\mathcal{H}}$ and $C_{\mathcal{H}^{*}}$ have the same length and dimension. We now prove that they have the same minimum distance. For this  consider, the set of all squarefree monomials
of  $S$ generated from the edges of the hypergraph $\mathcal{H}$, $E=\{f_1,\ldots,f_n\}$. Let $E^*=\{f_1^*,\ldots,f_n^*\}$ be the set of squarefree monomials generated from the edges of the hypergraph $\mathcal{H}^{*}$, which are actually given by  
$$f_i^*:=t_1\cdots t_s/f_i,$$ 
for $i=1,\ldots,n$.  For $f\in KE$ we write $f=\sum_{i=1}^n\lambda_if_i$, $\lambda_i\in\mathbb{F}_q$ for all $i$, and 
we set $f^*:=\sum_{i=1}^n\lambda_if_i^*$. From the equalities 
\begin{eqnarray*}
t_1\cdots t_sf^*\left(\frac{1}{t_1},\ldots,\frac{1}{t_s}\right)
&=&t_1\cdots
t_s\sum_{i=1}^n\lambda_if_i^*\left(\frac{1}{t_1},\ldots,\frac{1}{t_s}\right)\\
&=&
\sum_{i=1}^n\lambda_if_i(t_1,\ldots,t_s)=f(t_1,\ldots,t_s),
\end{eqnarray*}
we get $t_1\cdots t_sf^*\left({t_1^{-1}},\ldots,{t_s^{-1}}\right)
=f(t_1,\ldots,t_s)$. Hence, setting $P_k=(p_{k,1},\ldots,p_{k,s})$,
one has 
\begin{equation}\label{feb11-20}
p_{k,1}\cdots p_{k,s}f^*(p_{k,1}^{-1},\ldots,
p_{k,s}^{-1})=p_{k,1}\cdots p_{k,s}f^*(Q_k)=f(P_k) 
\end{equation}
for $k=1,\ldots,m$. There is a commutative diagram
$$
\begin{array}{ccc}
f &{\longrightarrow}& 
(f(P_1),\ldots,f(P_m))\\ 
\Big\downarrow& &\Big\downarrow\\ 
f^{*}
&{\longrightarrow}& 
(f^*(Q_1),\ldots,f^*(Q_m))
\end{array}
$$
where all the maps are isomorphisms of linear spaces.
Equation (\ref{feb11-20}) implies that a polynomial $f$ vanishes at a point  $P_k$ if and only if $f^*$ vanishes at $Q_k$. Hence, by Remark \ref{chi_zeros} and the definition of $C_{\mathcal{H}^*}$ we can realize that the linear codes $C_{\mathcal{H}}$ and $C_{\mathcal{H}^*}$ have the  same minimum distance. 
\end{proof}

As previously mentioned, determining the minimum distance of evaluation codes equates to estimating the maximum number of zeros a polynomial in the linear space $\mathcal{L}$ can possess \cite{Sudhir_two,Sudhir_four}. The subsequent propositions delineate certain limits on the maximum number of zeros for a square-free polynomial. Subsequently, we employ these findings to calculate the minimum distance of an edge code.

\begin{proposition}\cite[Proposition 4.3]{Jara_Pinto_Villa}\label{squarefree-affine} Let $f$ be a squarefree 
polynomial in $S\setminus\mathbb{F}_q$ of degree at most $d$ and let $T$
be the affine torus $(\mathbb{F}_q^*)^s$ of $\mathbb{A}^s$. If $q\geq
3$ and $d\leq s$, then 
$$
|V_T(f)|\leq(q-1)^{s}-(q-2)^{d}(q-1)^{s-d}.
$$

\end{proposition}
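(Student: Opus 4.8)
The plan is to obtain the bound from the footprint/degree inequality of Theorem~\ref{degree-initial-theorem}, applied to $X=T$ and $F=\{f\}$. First I would recall the standard description of the vanishing ideal of the torus, $I(T)=(t_1^{q-1}-1,\dots,t_s^{q-1}-1)$, a zero-dimensional complete intersection whose generators have pairwise coprime leading terms; hence they form a Gr\"obner basis for \emph{any} monomial order $\prec$, so that $\init(I(T))=(t_1^{q-1},\dots,t_s^{q-1})=:L$, with the notation $d_1=\cdots=d_s=q-1$ of Lemma~\ref{apr5-19}. If $V_T(f)=\emptyset$ the asserted inequality is immediate, since $(q-2)^d(q-1)^{s-d}\leq(q-1)^s$; so I may assume $V_T(f)\neq\emptyset$, which by Lemma~\ref{vila-delio-feb27-20} means $(I(T):(f))\neq I(T)$, and Theorem~\ref{degree-initial-theorem} then gives $|V_T(f)|=\deg(S/(I(T),f))\leq\deg(S/(L,\init(f)))$.

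The crucial observation is that $\init(f)$ is itself a squarefree monomial of controlled degree. Indeed, $\init(f)$ is one of the monomials occurring with nonzero coefficient in $f$; since $f$ is squarefree every such monomial is a product of distinct variables, and its degree is at most $\deg(f)\leq d$. Write $\init(f)=t^a$ with $a\in\{0,1\}^s$ and set $r:=\deg(t^a)\leq d$. Because $q\geq 3$ forces $q-1\geq 2$, no variable occurs in $t^a$ to a power $\geq q-1$, so $t^a\notin L$ and Lemma~\ref{apr5-19} applies. Evaluating the product there (a factor $q-1-1=q-2$ for each of the $r$ exponents equal to $1$, and a factor $q-1$ for each of the $s-r$ exponents equal to $0$) yields $\deg(S/(L,t^a))=(q-1)^s-(q-2)^r(q-1)^{s-r}$.

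It then remains to compare this with the target quantity, i.e.\ to verify $(q-2)^r(q-1)^{s-r}\geq(q-2)^d(q-1)^{s-d}$ whenever $r\leq d$. Dividing through by $(q-2)^r(q-1)^{s-d}$ (legitimate since $q\geq 3$ gives $q-2\geq 1>0$) reduces this to $(q-1)^{d-r}\geq(q-2)^{d-r}$, which holds because $q-1\geq q-2\geq 1$ and $d-r\geq 0$. Combining this monotonicity with the degree inequality of the previous step gives $|V_T(f)|\leq(q-1)^s-(q-2)^d(q-1)^{s-d}$, as claimed. The only genuinely delicate point is this last monotonicity step, as it is exactly where the hypothesis $q\geq 3$ is used twice: once to guarantee $t^a\notin L$, and once to ensure the ratio $(q-1)/(q-2)\geq 1$; the squarefreeness of $f$ is what keeps the exponent of $\init(f)$ in $\{0,1\}$ and its degree at most $d$, and everything else is a direct application of the quoted lemmas.
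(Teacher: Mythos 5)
Your argument is correct: the reduction to the footprint via Lemma~\ref{vila-delio-feb27-20} and Theorem~\ref{degree-initial-theorem}, the observation that $\init_{\prec}(f)$ is a squarefree monomial of degree $r\leq d$ not lying in $L=(t_1^{q-1},\ldots,t_s^{q-1})$, the evaluation of $\deg(S/(L,\init_{\prec}(f)))$ by Lemma~\ref{apr5-19}, and the final monotonicity in $r$ are all sound. The paper itself states this proposition as a citation to \cite[Proposition 4.3]{Jara_Pinto_Villa} without reproducing a proof, but your footprint-bound argument is precisely the approach that the quoted preliminaries are set up for and that the cited source uses, so there is nothing to flag.
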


\begin{proposition}\cite[Corollary 4.6]{Jara_Pinto_Villa}\label{bound_zeros} Let $f$ be a monomially squarefree 
homogeneous polynomial in $S_d\setminus\mathbb{F}_q$ and let $T$
be the affine torus $(\mathbb{F}_q^*)^s$ of $\mathbb{A}^s$. If $q\geq
3$ and $s/2 < d < s$, then 
$$
|V_T(f)|\leq(q-1)^{s}-(q-2)^{s-d}(q-1)^{d}.
$$
and this upper bound is sharp.
\end{proposition}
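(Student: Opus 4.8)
The plan is to reduce this bound to the already-established low-degree estimate of Proposition~\ref{squarefree-affine} by passing to the complement, and then to prove sharpness by an explicit homogeneous construction. Given a monomially squarefree homogeneous $f\in S_d\setminus\mathbb{F}_q$, I would write $f=\sum_{i}\lambda_i f_i$ with the $f_i$ distinct squarefree monomials of degree $d$ and form its complement $f^*=\sum_i\lambda_i f_i^*$, where $f_i^*:=t_1\cdots t_s/f_i$, exactly as in the proof of Lemma~\ref{aster_code}. Each $f_i^*$ is a squarefree monomial of degree $s-d$, so $f^*$ is a monomially squarefree (homogeneous) polynomial of degree $s-d$, and $f^*\notin\mathbb{F}_q$ because $d<s$. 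By the identity (\ref{feb11-20}) together with the fact that $\sigma$ is a bijection of $T$ onto $T$, one has $f(P_k)=0$ if and only if $f^*(Q_k)=0$, and hence $|V_T(f)|=|V_T(f^*)|$.

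Next I would apply Proposition~\ref{squarefree-affine} to $f^*$, taking the role of the degree bound there to be $s-d$. The hypotheses are met: $q\geq 3$ and $s-d\leq s$. This gives
$$
|V_T(f^*)|\leq (q-1)^{s}-(q-2)^{s-d}(q-1)^{s-(s-d)}=(q-1)^{s}-(q-2)^{s-d}(q-1)^{d},
$$
and combining with $|V_T(f)|=|V_T(f^*)|$ yields the asserted upper bound. Note that this step is exactly where the constraint $s/2<d<s$ is converted into the low-degree regime $0<s-d<s/2$ to which the squarefree estimate applies.

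For sharpness I would exhibit an explicit extremal polynomial. The naive extremal example $\prod_{i=1}^{s-d}(t_i-1)$ attaining the low-degree bound is not homogeneous, so instead I pair up disjoint variables. Put $r:=s-d$; since $d>s/2$ we have $2r<s$, so the $2r$ variables $t_1,\dots,t_{2r}$ are available, and I set $g:=\prod_{i=1}^{r}(t_{2i-1}-t_{2i})$. Because the $r$ factors involve pairwise disjoint pairs of variables, the expansion of $g$ is monomially squarefree and homogeneous of degree $r=s-d$. On $T$, $g$ vanishes precisely when $t_{2i-1}=t_{2i}$ for some $i$; counting the complementary set, there are $[(q-1)(q-2)]^{r}(q-1)^{s-2r}=(q-2)^{s-d}(q-1)^{d}$ points where $g$ is nonzero, so $|V_T(g)|=(q-1)^{s}-(q-2)^{s-d}(q-1)^{d}$. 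Its complement $g^*$ is then a monomially squarefree homogeneous polynomial of degree $s-r=d$ with $|V_T(g^*)|=|V_T(g)|$ equal to the bound; equivalently one may write the extremal degree-$d$ polynomial directly as $t_{2r+1}\cdots t_s\prod_{i=1}^{r}(t_{2i-1}-t_{2i})$.

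The main obstacle is the sharpness half rather than the upper bound: one must produce an extremal example that is simultaneously homogeneous and monomially squarefree, and the standard product-of-linear-factors example fails to be homogeneous. The key point making the construction work is that $d>s/2$ forces $2(s-d)<s$, leaving exactly enough room to select disjoint variable pairs, so that the difference-of-variables factors keep every expanded monomial squarefree while preserving homogeneity.
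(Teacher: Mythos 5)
Your argument is correct. Note that the paper itself gives no proof of Proposition~\ref{bound_zeros}: it is quoted verbatim from \cite[Corollary~4.6]{Jara_Pinto_Villa}. Your reconstruction is exactly the intended derivation and uses precisely the tools this paper deploys elsewhere: the complementation $f\mapsto f^*$ with the identity $t_1\cdots t_s f^*(t_1^{-1},\ldots,t_s^{-1})=f(t_1,\ldots,t_s)$ is the same device as in the proof of Lemma~\ref{aster_code} (giving $|V_T(f)|=|V_T(f^*)|$ and reducing to the low-degree bound of Proposition~\ref{squarefree-affine} applied in degree $s-d$), and your extremal polynomial $t_{2r+1}\cdots t_s\prod_{i=1}^{r}(t_{2i-1}-t_{2i})$ with $r=s-d$ is exactly the configuration counted in Proposition~\ref{zero_poly}, with the correct value $(q-1)^{s}-(q-2)^{s-d}(q-1)^{d}$. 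Both halves check out, including the observation that $d>s/2$ is what guarantees enough disjoint variable pairs for a homogeneous, monomially squarefree witness.
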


\begin{proposition}\label{zero_poly}
Let $f$ be a squarefree polynomial $S\setminus\mathbb{F}_q$ of degree $d$ such that $f=h_1\dots h_d$ with $h_i=t_k-t_l$ or $h_i=t_k-1$, and for $i\neq j$,  $h_i$, $h_j$ do not have variables in common. Then, 
$$
|V_T(f)|=(q-1)^{s}-(q-2)^{s-d}(q-1)^{d}.
$$
 
\end{proposition}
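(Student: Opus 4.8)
The plan is to compute $|V_T(f)|$ by complementary counting on the torus. Since $S$ is an integral domain, for a point $P\in T$ we have $f(P)=0$ exactly when $h_i(P)=0$ for at least one $i$; equivalently, $f(P)\neq 0$ precisely when $h_i(P)\neq 0$ for every $i$. Hence
\begin{equation*}
|V_T(f)| = (q-1)^{s} - N, \qquad N:=\#\{P\in T : h_i(P)\neq 0 \text{ for all } i\}.
\end{equation*}
The key structural input is that $h_1,\dots,h_d$ have pairwise disjoint variable supports. Each coordinate of $T$ ranges over $\mathbb{F}_q^{*}$, so the condition ``$h_i(P)\neq 0$ for all $i$'' decouples into independent conditions on disjoint blocks of coordinates, together with the coordinates appearing in no factor, which are completely free. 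Therefore $N$ factors as a product of local counts, one per factor and one per free variable.

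The main step is to evaluate these local counts. A factor $h_i=t_k-1$ uses one variable and imposes $t_k\in\mathbb{F}_q^{*}\setminus\{1\}$, giving $q-2$ choices. A factor $h_i=t_k-t_l$ uses two variables and imposes $t_k,t_l\in\mathbb{F}_q^{*}$ with $t_k\neq t_l$; there are $(q-1)^{2}-(q-1)=(q-1)(q-2)$ such pairs. Each free variable contributes a factor $q-1$. If $a$ factors are of the first type and $b$ of the second, then $a+b=d$, the factors use $a+2b$ variables, and
\begin{align*}
N&=(q-2)^{a}\cdot\big[(q-1)(q-2)\big]^{b}\cdot(q-1)^{\,s-a-2b}\\
&=(q-2)^{a+b}(q-1)^{\,s-a-b}=(q-2)^{d}(q-1)^{\,s-d}.
\end{align*}
Substituting back gives $|V_T(f)|=(q-1)^{s}-(q-2)^{d}(q-1)^{\,s-d}$. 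Note that the exponent of $(q-2)$ equals the number $d$ of linear factors, which is $\deg f$; this is exactly the extremal configuration meeting the bound of Proposition \ref{squarefree-affine}.

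I do not expect a serious obstacle, as the content is a finite count. The only points requiring care are the two-variable local count $(q-1)(q-2)$ for a factor $t_k-t_l$, and the bookkeeping showing that each such factor contributes one $(q-2)$ and one $(q-1)$, so that the total collapses to $(q-2)^{d}(q-1)^{s-d}$ independently of the split between the two factor types. As an independent check one may run inclusion--exclusion over the subvarieties $V_T(h_i)$: disjoint supports give $|V_T(h_{i_1})\cap\cdots\cap V_T(h_{i_k})|=(q-1)^{s-k}$, whence
\begin{equation*}
|V_T(f)|=\sum_{k=1}^{d}(-1)^{k+1}\binom{d}{k}(q-1)^{s-k}=(q-1)^{s}\Big(1-\big(1-\tfrac{1}{q-1}\big)^{d}\Big),
\end{equation*}
which recovers the same expression by the binomial theorem.
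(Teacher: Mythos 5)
Your argument is correct, and your main computation takes a genuinely different (and arguably cleaner) route than the paper's. The paper writes $V_T(f)=\bigcup_{i}V_T(h_i)$ and runs inclusion--exclusion, using the disjointness of the supports to get $|V_T(h_{\ell_1})\cap\cdots\cap V_T(h_{\ell_i})|=(q-1)^{s-i}$, and then collapses the alternating sum $\sum_{i=1}^{d}(-1)^{i-1}\binom{d}{i}(q-1)^{s-i}$ via the binomial theorem; this is precisely the computation you relegate to your ``independent check'' at the end. Your primary argument instead counts the complement $\{P\in T: h_i(P)\neq 0\ \forall i\}$ directly as a product of local counts over the disjoint variable blocks ($q-2$ for a factor $t_k-1$, $(q-1)(q-2)$ for a factor $t_k-t_l$, and $q-1$ per unused variable), which avoids the alternating sum entirely and makes transparent why the answer depends only on the number $d$ of factors and not on the split between the two factor types. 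One point you should flag explicitly: both your derivation and the paper's own proof arrive at $(q-1)^{s}-(q-2)^{d}(q-1)^{s-d}$, whereas the proposition as printed reads $(q-1)^{s}-(q-2)^{s-d}(q-1)^{d}$; the exponents in the displayed statement are transposed (test $s=d=1$, $f=t_1-1$, which has exactly one zero on $T$), and the form you obtained is the one the paper actually invokes later in the proofs of Theorems \ref{clutter_theorem} and \ref{relative_code}, so your proof establishes the intended statement rather than the literal one.
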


\begin{proof}
Note that $V_T(f)$ is equal to $\bigcup_{i=1}^{d_1}V_T(h_i)$, using the inclusion-exclusion  principle \cite[p.~38, Formula~2.12]{Aigner}, we get 
\begin{eqnarray*}
|V_T(f)|&=&\sum_{1\leq \ell_1\leq
d}|V_T(h_{\ell_1})|-\sum_{1\leq \ell_1<\ell_2\leq
d}|V_T(h_{\ell_1})\textstyle\cap V_T(h_{\ell_2})|\\
& &\ \ \ \ \ \ \ \ +\cdots+(-1)^{d-1}|V_T(h_1)
\textstyle\cap \cdots
\textstyle\cap  V_T(h_{d})|.
\end{eqnarray*}
\quad The variables occurring in $h_i$ and $h_j$ are disjoint for $i\neq j$, then $|V_T(h_{\ell_1})\textstyle\cap \cdots{\textstyle\cap}V_T(h_{\ell_i})|= (q-1)^{s-i}$. 
Thus, considering all possible combination, one obtains
$$
\sum_{1\leq\ell_1<\cdots<\ell_i\leq
d}|V_T(h_{\ell_1})\textstyle\cap \cdots{\textstyle\cap}V_T(h_{\ell_i})|=
\displaystyle\binom{d}{i}(q-1)^{s-i},
$$
and consequently the number of zeros of $f$ in $T$ is given by
\begin{eqnarray}\label{Notre-Dame-Cathedral}
|V_T(f)|&=&\sum_{i=1}^{d}(-1)^{i-1}\binom{d}{i}(q-1)^{s-i}=(q-1)^{s-d}
\sum_{i=1}^{d}(-1)^{i-1}\binom{d}{i}(q-1)^{d-i}\nonumber\\
&=&(q-1)^{s-d}\left[(q-1)^{d}-((q-1)-1)^{d}\right]\nonumber\\
&=&
(q-1)^{s-d}\left[(q-1)^{d}-(q-2)^{d}\right]\nonumber\\
&=&(q-1)^s-(q-2)^{d}(q-1)^{s-d}.
\end{eqnarray}
\end{proof}

To describe the combinatorial relation between the minimum distance of an edge code and its hypergraph, we use some special paths on a multipartite graph. We use the notation $G_{i_1,\dots,i_n}$ to denote a connected multipartite graph with vertex partition $V(G_{i_1,\dots,i_n})=V_1\sqcup \dots\sqcup V_n$ such that $|V_k|=i_k$ for $k=1,\dots,n$.

\begin{definition}
A {\it partite path} of length $k$ in a multipartite graph $G_{i_1,\dots,i_n}$ is a path $\{j_1,\dots,j_{k+1}\}$ such that $j_r\in V_r$. The set of  partite paths of length $k$ in $G_{i_1,\dots,i_n}$ forms a clutter and it is denoted by $\mathfrak{C}_{k}(G_{i_1,\dots,i_n})$.
\end{definition}

In particular, we use the clutter of  partite paths of the following multipartite graph, denoted by $J_{d\times2}$, with $2d$ vertices $V(J_{d\times2})=\{t_{i_1},t_{i_2}\}\sqcup \dots\sqcup \{t_{i_{2d-1}},t_{i_2d}\}$.

\begin{figure}[H]
\centering
\begin{tikzpicture}[line width=.5pt,scale=0.75]
		\tikzstyle{every node}=[inner sep=1pt, minimum width=5.5pt] 
\tiny{
\node (1) at (-1,1){$\bullet$};
\node (3) at (1,1) {$\bullet$};
\node (2) at (-1,-1) {$\bullet$};
\node (4) at (1,-1) {$\bullet$};
\node (5) at (3,1) {$\bullet$};
\node (6) at (3,-1) {$\bullet$};
\node (7) at (4,1) {};
\node (8) at (4,-1) {};
\node (9) at (4,0) {};
\node at (-1,1.3){$t_{i_1}$};
\node at (1,1.3) {$t_{i_3}$};
\node at (-1,-1.3){$t_{i_2}$};
\node at (1,-1.3) {$t_{i_4}$};
\node at (3,1.3){$t_{i_5}$};
\node at (3,-1.3) {$t_{i_6}$};
\node at (4.7,0) {$\dots$};
\draw[-,line width=1pt] (1) to (3);
\draw[-,line width=1pt] (1) to (4);
\draw[-,line width=1pt] (2) -- (3);
\draw[-,line width=1pt] (2) -- (4);
\draw[-,line width=1pt] (3) -- (5);
\draw[-,line width=1pt] (3) -- (6);
\draw[-,line width=1pt] (4) -- (5);
\draw[-,line width=1pt] (4) -- (6);
\draw[-,line width=1pt] (5) -- (7);
\draw[-,line width=1pt] (6) -- (9);
\draw[-,line width=1pt] (6) -- (8);
\draw[-,line width=1pt] (5) -- (9);
}
\end{tikzpicture}
\begin{tikzpicture}[line width=.5pt,scale=0.75]
		\tikzstyle{every node}=[inner sep=1pt, minimum width=5.5pt] 
\tiny{
\node (1) at (-1,1){$\bullet$};
\node (3) at (1,1) {$\bullet$};
\node (2) at (-1,-1) {$\bullet$};
\node (4) at (1,-1) {$\bullet$};
\node (5) at (3,1) {$\bullet$};
\node (6) at (3,-1) {$\bullet$};
\node (7) at (-2,1) {};
\node (8) at (-2,-1) {};
\node (9) at (-2,0) {};
\node at (-1,1.3){$t_{i_{2d-5}}$};
\node at (1,1.3) {$t_{i_{2d-3}}$};
\node at (-1,-1.3){$t_{i_{2d-4}}$};
\node at (1,-1.3) {$t_{i_{2d-2}}$};
\node at (3,1.3){$t_{i_{2d-1}}$};
\node at (3,-1.3) {$t_{i_{2d}}$};
\draw[-,line width=1pt] (1) to (3);
\draw[-,line width=1pt] (1) to (4);
\draw[-,line width=1pt] (2) -- (3);
\draw[-,line width=1pt] (2) -- (4);
\draw[-,line width=1pt] (3) -- (5);
\draw[-,line width=1pt] (3) -- (6);
\draw[-,line width=1pt] (4) -- (5);
\draw[-,line width=1pt] (4) -- (6);
\draw[-,line width=1pt] (1) -- (7);
\draw[-,line width=1pt] (1) -- (9);
\draw[-,line width=1pt] (2) -- (8);
\draw[-,line width=1pt] (2) -- (9);
}
\end{tikzpicture}
\caption{}\label{Example3}
\end{figure}

\begin{theorem}\label{clutter_theorem}
Let $\mathcal{C}$ be an $d$-uniform clutter, and $q\geq 3$. The minimum distance of the edge code $C_{\mathcal{C}}$ is given by 
$$
\delta(C_{\mathcal{C}})=\begin{cases}
(q-2)^d(q-1)^{s-d}&\mbox{ if } d\leq s/2, \mbox{ and } \mathfrak{C}_{d-1}(J_{d\times 2})\subseteq \mathcal{C}\\
(q-2)^{s-d}(q-1)^{d}&\mbox{ if }s/2 < d < s, \mbox{ and } \mathfrak{C}_{s-d-1}(J_{(s-d)\times 2})\subseteq \mathcal{C^*}\\
\end{cases}
$$
\end{theorem}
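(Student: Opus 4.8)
The plan is to compute $\delta(C_{\mathcal{C}})$ through the zero-counting description of the minimum distance in Remark~\ref{chi_zeros}, namely $\delta(C_{\mathcal{C}}) = (q-1)^s - \max_{0\neq f\in KE(\mathcal{C})}|V_T(f)|$, and to handle the two cases so that only the first requires a direct argument, deriving the second by passing to the edge-removed code.

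First I would settle the case $d\leq s/2$. Since $\mathcal{C}$ is $d$-uniform, every nonzero $f\in KE(\mathcal{C})$ is a $K$-linear combination of squarefree monomials of degree $d$, hence a monomially squarefree homogeneous polynomial of degree $d$ lying in $S\setminus\mathbb{F}_q$. As $d\leq s$ and $q\geq 3$, Proposition~\ref{squarefree-affine} gives $|V_T(f)|\leq (q-1)^s-(q-2)^d(q-1)^{s-d}$ for every such $f$, which yields the upper bound $\delta(C_{\mathcal{C}})\geq (q-2)^d(q-1)^{s-d}$. For the matching lower bound I would exhibit an explicit extremal polynomial inside $KE(\mathcal{C})$: choosing $2d\leq s$ distinct variables, set $g=\prod_{r=1}^d (t_{i_{2r-1}}-t_{i_{2r}})$. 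The \emph{key} combinatorial observation is that expanding $g$ produces, up to sign, exactly the $2^d$ squarefree degree-$d$ monomials obtained by selecting one variable from each pair $\{t_{i_{2r-1}},t_{i_{2r}}\}$, and these are precisely the edge-monomials $f_e$ of the partite paths $e\in\mathfrak{C}_{d-1}(J_{d\times 2})$. Thus the hypothesis $\mathfrak{C}_{d-1}(J_{d\times 2})\subseteq\mathcal{C}$ guarantees $g=\sum_e\pm f_e\in KE(\mathcal{C})$. Because $g$ is a product of $d$ binomials of the form $t_k-t_l$ with pairwise disjoint variables, Proposition~\ref{zero_poly} gives $|V_T(g)|=(q-1)^s-(q-2)^d(q-1)^{s-d}$, so the upper bound is attained and $\delta(C_{\mathcal{C}})=(q-2)^d(q-1)^{s-d}$.

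Next I would deduce the case $s/2<d<s$ by reduction to the first one. Putting $d':=s-d$, the edge-removed hypergraph $\mathcal{C}^*$ has all its edges of size $s-d$ (complements of $d$-sets), hence is an $(s-d)$-uniform clutter, and $0<d'<s/2$. The hypothesis $\mathfrak{C}_{s-d-1}(J_{(s-d)\times 2})\subseteq\mathcal{C}^*$ is exactly the first-case condition applied to $\mathcal{C}^*$, so the already-proven first case yields $\delta(C_{\mathcal{C}^*})=(q-2)^{d'}(q-1)^{s-d'}=(q-2)^{s-d}(q-1)^d$. By Lemma~\ref{aster_code} the codes $C_{\mathcal{C}}$ and $C_{\mathcal{C}^*}$ have the same minimum distance, which gives the claimed value in the second case.

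I expect the main obstacle to be the achievability step of the first case, i.e.\ verifying cleanly that the expansion of $g$ lands in the span $KE(\mathcal{C})$. This amounts to matching, monomial by monomial, the terms of $\prod_r(t_{i_{2r-1}}-t_{i_{2r}})$ with the edge-monomials of the partite paths of $J_{d\times 2}$, and then checking that $g$ satisfies the disjoint-support hypothesis of Proposition~\ref{zero_poly}; the upper bounds and the reduction for $s/2<d<s$ are then essentially formal consequences of Propositions~\ref{squarefree-affine} and~\ref{zero_poly} together with Lemma~\ref{aster_code}.
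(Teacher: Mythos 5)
Your proposal is correct and follows essentially the same route as the paper: the bound from Proposition~\ref{squarefree-affine} via Remark~\ref{chi_zeros}, attainment by the polynomial $\prod_{r=1}^{d}(t_{i_{2r-1}}-t_{i_{2r}})$ whose monomials are exactly the partite paths of $\mathfrak{C}_{d-1}(J_{d\times 2})$ (the paper verifies this identification by induction on $d$, you assert the expansion directly, which is fine), counting its zeros with Proposition~\ref{zero_poly}, and reducing the case $s/2<d<s$ to the first case through $\mathcal{C}^*$ and Lemma~\ref{aster_code}. The only blemish is a harmless swap of the words ``upper'' and ``lower'' in describing the two bounds on $\delta(C_{\mathcal{C}})$; the inequalities themselves point the right way.
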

\begin{proof}
We start with the case $d\leq s/2$, and  $\mathfrak{C}_{d-1}(J_{d\times 2})\subseteq \mathcal{C}$. Proposition  \ref{squarefree-affine} and Remark \ref{chi_zeros} implies that
\begin{align*}
\delta(C_{\mathcal{C}})&=\min\{|T\setminus V_{T}(f)|\:|\: f\in KE(\mathcal{C})\}\\
&=|T|-\max\{|V_{T}(f)|\:|\: f\in KE(\mathcal{C})\}\leq (q-2)^d(q-1)^{s-d}					
\end{align*} 
To show the equality we need to find a polynomial in $KE(\mathfrak{C}_{d-1}(J_{d\times 2}))$ with exactly $(q-1)^s-(q-2)^d(q-1)^{s-d}$. Consider the set of vertices $V(J_{d\times 2})=\{t_{i_1},\dots,t_{i_{2d}}\}$, and the following polynomial 
$$
f=(t_{i_1}-t_{i_2})\dots(t_{i_{2d-1}}-t_{i_{2d}}).
$$
We claim that $f\in KE(\mathfrak{C}_{d-1}(J_{d\times 2}))$. We prove this by induction on $d$. For the $d=2$, $f=(t_{i_1}-t_{i_2})(t_{i_{3}}-t_{i_{4}})=t_{i_{1}}t_{i_{3}}-t_{i_{1}}t_{i_{4}}-t_{i_{2}}t_{i_{3}}+t_{i_{2}}t_{i_{4}}$. Notices that the monomials in $f$ are exactly the path of length $d-1=1$ of $J_{2\times2}$, which proves the first step of the induction. Now assume that the claim is true for $r<d$. Every partite path of length $d-1$ in $J_{d\times2}$ should end or start from $t_{i_1}$ or $t_{i_2}$, then if we remove these two vertices from $J_{d\times2}$ we obtain that all the partite paths of length $d-2$ in $J_{d\times2}$ that do not contain $t_{i_1}$ and $t_{i_2}$ are partite paths of  $J_{(d-1)\times2}$, with $V(J_{(d-1)\times 2})=\{t_{i_3},\dots,t_{i_{2d}}\}$. Since these partite paths in $J_{(d-1)\times 2}$ corresponds to the monomials of
$$
g=(t_{i_3}-t_{i_4})\dots(t_{i_{2d-1}}-t_{i_{2d}}),
$$
induction hypothesis implies that $g\in KE(\mathfrak{C}_{d-1}(J_{d\times 2}))$. From this we conclude that the monomials in 
$$
f=t_{i_1}g-t_{i_2}g=(t_{i_1}-t_{i_2})g
$$
form the partite paths of length $d-1$ in $J_{d\times2}$. Thus, $f\in KE(\mathfrak{C}_{d-1}(J_{d\times 2}))$. Since, $\mathfrak{C}_{d-1}(J_{d\times 2})\subseteq \mathcal{C}$, we get that $f\in KE(\mathcal{C})$. Proposition \ref{zero_poly} implies that 
$$
|V_T(f)|=(q-1)^s-(q-2)^d(q-1)^{s-d}.
$$
Hence, we obtain that $\delta(C_{\mathcal{C}})= (q-2)^d(q-1)^{s-d}$.
For the other case, assume that $s/2 < d < s$, and $\mathfrak{C}_{s-d-1}(J_{(s-d)\times 2})\subseteq \mathcal{C^*}$. Notice that the clutter $\mathcal{C}^*$ is homogeneous of degree $s-d\leq s/2$. The first case of the proof implies that the minimum distance of  the code $C_{\mathcal{C}^*}$ is $(q-2)^{s-d}(q-1)^{d}$. Finally, from Lemma \ref{aster_code} we conclude the result.

\end{proof}

\begin{remark}\

\begin{itemize}
\item One insight behind the preceding theorem arises from the case of graphs when $d=2$. For $d\leq s/2$, the code achieves the minimum possible distance only if the graph contains a cycle of length $4$. This is because the polynomial $f=(t_{i_1}-t_{i_2})(t_{i_3}-t_{i_4})$ is a squarefree polynomial with the maximum number of zeros. It's noteworthy that the monomials of this polynomial correspond to partite paths of the multipartite graph $J_{2\times 2}$, which forms a cycle of length $4$. Thus, the rationale for employing the clutter of partite paths is to extend this phenomenon to higher values of $d>2$.

\item Notice that for a given $d$-hypersimplex $\mathcal{P}$, the set $V_{d}$—comprising all $t^{a}$ where $a\in\mathcal{P}\cap\mathbb{Z}$—forms a $d$-uniform clutter, consisting of all the square-free monomials of degree $d$. It includes either $\mathfrak{C}_{d-1}(J_{d\times 2})$ or $\mathfrak{C}_{s-d-1}(J_{d\times 2})$ depending on the value of $d$. Consequently, Theorem \ref{clutter_theorem} extends the formula for the minimum distance of the toric code over hypersimplices \cite[Theorem 4.5]{Jara_Pinto_Villa}.
\item Theorem \ref{clutter_theorem} also provides a method to identify non-equivalent codes with the same minimum distance. Consider a four-cycle graph $G$. Both the edge code $C_{G}$ and the toric code $C_{\mathcal{P}}(2)$ of degree $2$ share the same minimum distance, yet they are distinct codes due to their differing dimensions.
\end{itemize}

\end{remark}

We continue with the computation of the minimum distance of edge codes for another family of hypergraphs. This result can be consider as an extension of the calculation of the minimum distance of squarefree evaluation codes \cite[Theorem 5.5]{Jara_Pinto_Villa}

\begin{theorem}\label{relative_code}
Let  $1\leq d_2\leq d_1$ be two integers, and $q\geq 3$. If the hypergraph $\mathcal{H}$ contains all the edges of size $r$ with $d_2\leq r\leq d_1$, the minimum distance of the edge code is given by 
$$
\delta(\mathcal{C}_{\mathcal{H}}) = \begin{cases} (q-2)^{d_{1}}(q-1)^{s-d_1}\hbox{ if } d_1+d_2\leq s\\
(q-2)^{s-d_{2}}(q-1)^{d_2}\hbox{ if }d_1+d_2> s.\\
\end{cases}
$$

\end{theorem}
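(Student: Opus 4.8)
The plan is to compute the minimum distance through the zero-counting description of Remark \ref{chi_zeros}, namely
$$
\delta(C_{\mathcal{H}}) = (q-1)^s-\max\{\,|V_T(f)| : f\in KE(\mathcal{H}),\ f\neq 0\,\},
$$
so the whole problem becomes that of maximizing the number of torus-zeros of a polynomial in $KE(\mathcal{H})$. The structural fact I would exploit at the outset is that every nonzero $f\in KE(\mathcal{H})$ is a $K$-linear combination of squarefree monomials whose degrees all lie in $[d_2,d_1]$; hence $f$ is a squarefree polynomial of degree at most $d_1$, and since $d_2\geq 1$ it has no constant term, so $f\in S\setminus\mathbb{F}_q$. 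I would then split along the two regimes of the statement.

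For the regime $d_1+d_2\leq s$ I would obtain the upper bound immediately from Proposition \ref{squarefree-affine}: because $q\geq 3$ and $d_1\leq s$, every such $f$ satisfies $|V_T(f)|\leq (q-1)^s-(q-2)^{d_1}(q-1)^{s-d_1}$, giving $\delta(C_{\mathcal{H}})\geq (q-2)^{d_1}(q-1)^{s-d_1}$. For the matching direction I would exhibit an explicit element of $KE(\mathcal{H})$ meeting the bound. The inequality $d_1+d_2\leq s$ leaves exactly enough variables to form
$$
f=\prod_{i=1}^{d_2}(t_{2i-1}-t_{2i})\cdot\prod_{j=1}^{d_1-d_2}(t_{2d_2+j}-1),
$$
a product of $d_1$ pairwise variable-disjoint factors of the two shapes permitted in Proposition \ref{zero_poly} (the largest index used being $d_1+d_2\leq s$). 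Expanding the first product contributes degree exactly $d_2$, and expanding the second contributes any degree from $0$ to $d_1-d_2$, so every monomial of $f$ is squarefree of degree in $[d_2,d_1]$ and therefore $f\in KE(\mathcal{H})$. Proposition \ref{zero_poly}, applied to these $d_1$ factors, then yields $|V_T(f)|=(q-1)^s-(q-2)^{d_1}(q-1)^{s-d_1}$, forcing equality and hence $\delta(C_{\mathcal{H}})=(q-2)^{d_1}(q-1)^{s-d_1}$.

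For the regime $d_1+d_2> s$ I would reduce to the regime just handled by passing to the edge-removed code. Since $\mathcal{H}$ contains every edge of size $r$ for $d_2\leq r\leq d_1$, the hypergraph $\mathcal{H}^*$ contains every edge $V\setminus e$, and these have sizes filling the interval $[s-d_1,\,s-d_2]$; moreover $(s-d_1)+(s-d_2)=2s-(d_1+d_2)<s$, so $\mathcal{H}^*$ lies in the first regime with parameters $(s-d_1,\,s-d_2)$ playing the roles of $(d_2,d_1)$. The first regime then gives $\delta(C_{\mathcal{H}^*})=(q-2)^{s-d_2}(q-1)^{s-(s-d_2)}=(q-2)^{s-d_2}(q-1)^{d_2}$, and Lemma \ref{aster_code} transfers this equality to $\delta(C_{\mathcal{H}})$, as claimed. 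In the boundary case $d_1=s$ the reduced family has lower index $s-d_1=0$, so $\mathcal{H}^*$ also contains the empty edge; this merely adjoins the constant $1$, which is nowhere zero on $T$ and so affects neither the upper bound nor the extremal construction, and the first-regime argument applies verbatim with lower index $0$.

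The citations to Propositions \ref{squarefree-affine} and \ref{zero_poly} and the size bookkeeping under $e\mapsto V\setminus e$ are routine. The step demanding the most care is the lower-bound construction in the first regime: one must simultaneously guarantee that $f$ uses at most $s$ variables (which is exactly $d_1+d_2\leq s$) and that no monomial of its expansion falls outside the admissible degree window $[d_2,d_1]$ (which is why precisely $d_2$ factors are taken of the form $t_k-t_l$ and the remaining $d_1-d_2$ of the form $t_k-1$). Balancing these two constraints is what makes the explicit polynomial both land inside $KE(\mathcal{H})$ and realize the maximal zero count of Proposition \ref{zero_poly}, and it is the only place where the precise hypothesis on the range of edge sizes is genuinely used.
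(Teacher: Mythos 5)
Your proof is correct and follows essentially the same route as the paper's: the upper bound via Proposition~\ref{squarefree-affine} together with Remark~\ref{chi_zeros}, the same extremal polynomial (a product of $d_2$ factors $t_{2i-1}-t_{2i}$ and $d_1-d_2$ factors $t_k-1$) analyzed with Proposition~\ref{zero_poly}, and the reduction of the case $d_1+d_2>s$ to the first case through $\mathcal{H}^*$ and Lemma~\ref{aster_code}. Your explicit verification that every monomial of the expansion has degree in $[d_2,d_1]$ (so that $f\in KE(\mathcal{H})$) and your handling of the boundary case $d_1=s$ are slightly more careful than the paper's write-up, but the argument is the same.
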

\begin{proof}
Assume that $d_1+d_2\leq s$. Notice that every polynomial $f$ in $KE(\mathcal{H})$ has degree at most $d_1$, then  as before by Proposition \ref{squarefree-affine} and Remark \ref{chi_zeros} we have that 
$$
\delta(\mathcal{C}_{\mathcal{H}})\geq (q-2)^{d_{1}}(q-1)^{s-d_1}.
$$
Now we need to find a polynomial $f$ with exactly $(q-1)^s-(q-2)^{d_1}(q-1)^{s-d_1}$ zeros. For this consider the polynomial 
$$
f=(t_1-t_2)\cdots(t_{2d_2-1}-t_{2d_2})(t_{2d_1+1}-1)\cdots(t_{d_1+d_2}-1)
$$
Notice $f$ has $d_1$ binomials, then Proposition \ref{zero_poly} implies that 
$$
|V_T(f)|=(q-1)^s-(q-2)^{d_1}(q-1)^{s-d_1}.
$$
Thus, we obtain that $\delta(C_{\mathcal{H}})=(q-2)^{d_1}(q-1)^{s-d_1}$.

Assume now that $s<d_1+d_2$. Notice that the hypergraph $\mathcal{H}^*$ contains all the edges of degree $r$ with $s-d_1\leq r\leq s-d_2$, then the first case of the proof implies that the minimum distance of  the code $C_{\mathcal{H}^*}$ is 
$$
(q-2)^{s-d_2}(q-1)^{s-(s-d_2)}=(q-2)^{s-d_2}(q-1)^{d_2}.
$$ 

Finally, from Lemma \ref{aster_code} we conclude the result.
\end{proof}
  
\begin{remark}
In the previous theorem for the case $d_1+d_2> s$ we actually can find a polynomial such that   its associated codeword has minimum distance $(q-2)^{s-d_{2}}(q-1)^{d_2}$. Such polynomial is 
\begin{align*}
f&=h_1\cdots h_{s-d_2}t_{i_1}\cdots t_{i_{(d_1+d_2-s)}}\\
&=(t_1-t_2)\cdots(t_{2(s-d_1)-1}-t_{2(s-d_1)})(t_{2(s-d_1)+1}-1)\cdots(t_{2s-(d_1+d_2)}-1)t_{i_1}\cdots t_{i_{d_1+d_2-s}},
\end{align*}
where $h_i=t_{2i-1}-t_{2i}$ for $i=1,\ldots,s-d_1$, and $h_i=t_{2(s-d_1)+i}-1$ for $i=1,\ldots,d_1-d_2$.
\end{remark}

In the following result, we investigate the relationship between an edge code and its dual by establishing the self-orthogonality of these codes.


\begin{theorem}\label{dual_code}
Given a hypergraph $\mathcal{H}$, the code $\mathcal{C}_{\mathcal{H}}$ is self-ortogonal.
\end{theorem}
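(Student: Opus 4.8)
The plan is to establish self-orthogonality, $C_{\mathcal{H}}\subseteq C_{\mathcal{H}}^{\perp}$, through the standard criterion for the Euclidean inner product: a linear code is self-orthogonal precisely when the members of any spanning set are pairwise orthogonal, each vector included with itself. Since the evaluation map is injective, the codewords $g_i:={\rm ev}(f_i)$ arising from the edge monomials $f_i=\prod_{j\in e_i}t_j$ form a basis of $C_{\mathcal{H}}$, so the whole statement reduces to checking
\[
\langle g_a,g_b\rangle=\sum_{P\in T}f_a(P)\,f_b(P)=0\qquad\text{for every pair of edges } e_a,e_b .
\]

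First I would rewrite the summand as a single monomial. For $P=(P_1,\dots,P_s)\in T$ one has $f_a(P)f_b(P)=\prod_{j=1}^{s}P_j^{\,c_j}$, where $c_j:=[\,j\in e_a\,]+[\,j\in e_b\,]\in\{0,1,2\}$ records how many of the two edges contain the vertex $j$. Because $T=(\mathbb{F}_q^{*})^{s}$ is a full product set, the sum factors completely over the vertices,
\[
\sum_{P\in T}\prod_{j=1}^{s}P_j^{\,c_j}=\prod_{j=1}^{s}\Bigl(\sum_{x\in\mathbb{F}_q^{*}}x^{\,c_j}\Bigr),
\]
and I would then invoke the elementary power-sum identity $\sum_{x\in\mathbb{F}_q^{*}}x^{c}=-1$ when $(q-1)\mid c$ and $0$ otherwise. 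Thus the product is zero the moment a single exponent $c_j$ fails to be a multiple of $q-1$.

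The argument then splits according to whether the two edges coincide. If $e_a\neq e_b$, their symmetric difference is nonempty, so some vertex has $c_j=1$; since $(q-1)\nmid 1$ for $q\geq 3$, the corresponding factor vanishes and $\langle g_a,g_b\rangle=0$. If $e_a=e_b$, then every vertex of the (nonempty) edge contributes $c_j=2$, and killing the factor $\sum_{x\in\mathbb{F}_q^{*}}x^{2}$ requires $(q-1)\nmid 2$.

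I expect this diagonal case to be the real obstacle, and it is exactly where the field size is forced: $(q-1)\nmid 2$ holds iff $q\geq 4$. For $q=3$ one instead computes $\sum_{x\in\mathbb{F}_3^{*}}x^{2}=1+1=-1\neq 0$, so already a single-edge code fails to be self-orthogonal; I would therefore carry out the proof under the hypothesis $q\geq 4$. No part of the argument needs the degree or footprint machinery of Section~\ref{prelimina}; the entire theorem rests on the two power-sum evaluations above.
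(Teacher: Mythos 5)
Your proof is correct, but it takes a genuinely different route from the paper. The paper does not compute any inner products directly: it invokes the duality theory of evaluation codes, using the footprint $\Delta_{\prec}(I(T))=\{t^c : 0\leq c_i\leq q-2\}$ and \cite[Proposition 7.2, Corollary 7.3]{Lopez2020TheDO} to identify $\mathcal{C}_{\mathcal{H}}^{\perp}$ as the evaluation of the span of $\Delta_{\prec}(I(T))\setminus\mathcal{B}$, where $\mathcal{B}$ consists of the ``complementary'' monomials $\prod_{t_j\in{\rm supp}(t^{a_i})}t_j^{q-2}$; self-orthogonality then follows because this complement contains all squarefree monomials. Your argument replaces that machinery with a direct computation of the Gram matrix of the generating codewords, factoring the sum over the product set $T$ and applying the power-sum identity $\sum_{x\in\mathbb{F}_q^*}x^c=-1$ if $(q-1)\mid c$ and $0$ otherwise --- which is, in fact, the same identity that drives the cited duality results, so the two proofs rest on the same arithmetic but at different levels of abstraction. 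What your elementary route buys is precision about the field size: you correctly isolate the diagonal case $e_a=e_b$, where $c_j=2$ forces $q\geq 4$, and your $q=3$ counterexample is genuine (for $q=3$ one has $q-2=1$, so $\mathcal{B}=E(\mathcal{H})$ and the paper's claim that $\Delta_{\prec}(I(T))\setminus\mathcal{B}$ contains all squarefree monomials fails; the theorem as stated is false there). So your proof is not only valid for $q\geq 4$ but also supplies the hypothesis the paper's argument silently requires. The only minor point to make explicit is that edges are nonempty, so the diagonal case always produces at least one vanishing factor.
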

\begin{proof}
Consider the following set of monomials $\mathcal{B}=\{t^{b_1},\dots,t^{b_k}\}$, such that 
$$
t^{b_i}=t_1^{b_{i,1}}\cdots t_s^{b_{i,s}}:=\prod_{t_j\in{\rm supp}(t^{a_i})}t_j^{q-2},
$$
$E(\mathcal{H})=\{t^{a_1},\dots,t^{a_k}\}$ is the monomial basis of $\mathcal{C}_{\mathcal{H}}$ generated by the edges of $\mathcal{H}$. Notices that the footprint of $I(T)$ is given by $\Delta_{\prec}(I(T))=\{t^c\:|\:c=(c_1,\dots,c_s),\: 0\leq c_i\leq q-2\}$. Thus, \cite[Proposition 7.2]{Lopez2020TheDO} implies that the algebraic dual of $\mathcal{C}_{\mathcal{H}}$ is $\mathcal{L}^{\perp}=K(\Delta_{\prec}(I(T))\setminus \mathcal{B})$, and from \cite[Corollary 7.3]{Lopez2020TheDO} we obtain  the dual of $\mathcal{C}_{\mathcal{H}}$ on $T$,
$$
\mathcal{C}_{\mathcal{H}}^{\perp}=(\mathcal{L}^{\perp})_T.
$$
Since $\Delta_{\prec}(I(T))\setminus \mathcal{B}$ contains all the square-free monomials we conclude that $\mathcal{C}_{\mathcal{H}}\subseteq\mathcal{C}_{\mathcal{H}}^{\perp}$.

\end{proof}

\begin{remark}
Theorem \ref{dual_code}, and the CSS construction \cite[Theorem 2.5]{relativehulls} imply that there is a quantum code with quantum distance equal to the minimum distance of $\mathcal{C}_{\mathcal{H}}$.
\end{remark}


\subsection{Hamming weights of an edge code of a tree}

Motivated by the weight distribution calculation of toric codes over a hypersimplex in degree one \cite[Theorem 4.5]{JayuLo}, we aim to extend this result to higher degrees.  In the case of edge code of a tree we can find a similar weight distribution for codes with squarefree monomials of degree greater than one.\\

To estimate the Hamming weights, we need first to study the affine variety definite for a polynomial in $KE(G)$, where $G$ is connected tree. The next two proposition give us an idea about its cardinality.

\begin{proposition}\cite[Proposition 4.4]{JayuLo}\label{weights-degree-one}
Let $r$ be an integer such that $2\leq r\leq s$. If $\alpha_{j_{1}},\dots,\alpha_{j_{r}}\in K^*$, and $f_{r}=\alpha_{j_{1}}t_{i_{1}}+\cdots+\alpha_{j_{r}}t_{i_{r}}$, then 
$$
|V_{T}(f_{r})|=(q-1)^{s-1}-(q-1)^{s-2}+\cdots+(-1)^{r}(q-1)^{s-(r-1)}.
$$
\end{proposition}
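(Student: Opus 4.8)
The plan is to reduce the statement to a purely enumerative question over $K^{*}=\mathbb{F}_q^{*}$ and then resolve it by a one-step recursion on $r$. First I would observe that $f_r$ involves only the $r$ coordinates $t_{i_1},\dots,t_{i_r}$, so the remaining $s-r$ coordinates of a point of $T=(K^{*})^{s}$ are completely unconstrained and contribute a free factor of $(q-1)^{s-r}$. Writing $N_r$ for the number of tuples $(x_1,\dots,x_r)\in(K^{*})^{r}$ satisfying $\alpha_{j_1}x_1+\cdots+\alpha_{j_r}x_r=0$, this gives $|V_{T}(f_r)|=(q-1)^{s-r}N_r$. Since every $\alpha_{j_\ell}$ lies in $K^{*}$, the assignment $y_\ell=\alpha_{j_\ell}x_\ell$ is a bijection of $(K^{*})^{r}$ onto itself, so $N_r$ equals the number $a_r$ of tuples $(y_1,\dots,y_r)\in(K^{*})^{r}$ with $y_1+\cdots+y_r=0$.

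Next I would set up a recursion for $a_r$. Choosing $y_1,\dots,y_{r-1}\in K^{*}$ freely forces $y_r=-(y_1+\cdots+y_{r-1})$, and this forced value lies in $K^{*}$ precisely when $y_1+\cdots+y_{r-1}\neq 0$. Among the $(q-1)^{r-1}$ admissible choices of $(y_1,\dots,y_{r-1})$, exactly $a_{r-1}$ of them have vanishing sum, so
\[
a_r=(q-1)^{r-1}-a_{r-1},\qquad a_1=0 .
\]
A routine induction then yields the closed form $a_r=\sum_{k=1}^{r-1}(-1)^{k-1}(q-1)^{r-k}$, whence $|V_{T}(f_r)|=(q-1)^{s-r}a_r=\sum_{k=1}^{r-1}(-1)^{k-1}(q-1)^{s-k}$, which is exactly the claimed alternating sum $(q-1)^{s-1}-(q-1)^{s-2}+\cdots+(-1)^{r}(q-1)^{s-(r-1)}$.

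As an alternative to the recursion I could compute $a_r$ directly by inclusion--exclusion over the subsets of coordinates that are allowed to vanish, using the elementary fact that a nonzero $K$-linear form in $m$ variables has exactly $q^{m-1}$ zeros in $K^{m}$; the binomial theorem then collapses the resulting alternating sum to $a_r=\big((q-1)^{r}+(-1)^{r}(q-1)\big)/q$, which in turn telescopes to the same expression. Either route is short, and I do not expect a genuine obstacle here: the content is entirely bookkeeping. The two points that require care are the clean separation of the $s-r$ absent variables, so that the exponent of $(q-1)$ shifts correctly from $r-k$ to $s-k$, and the base case of the recursion, which must be pinned down so that the sign pattern and the terminal exponent $s-(r-1)$ come out right.
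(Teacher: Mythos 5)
Your argument is correct. Note that the paper does not prove Proposition~\ref{weights-degree-one} at all: it is imported verbatim from the cited reference (Proposition 4.4 of that work), so there is no internal proof to compare against. Your derivation is the standard one and checks out: the $s-r$ absent coordinates split off a factor $(q-1)^{s-r}$, the substitution $y_{\ell}=\alpha_{j_{\ell}}x_{\ell}$ reduces the count to zero-sum tuples in $(K^{*})^{r}$, and the recursion $a_{r}=(q-1)^{r-1}-a_{r-1}$ with $a_{1}=0$ gives the alternating sum with the correct terminal sign $(-1)^{r}$ and exponent $s-(r-1)$. Your alternative inclusion--exclusion route is, incidentally, the same technique the paper itself employs in the proof of Proposition~\ref{zero_poly}, so either version fits naturally alongside the surrounding material.
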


\begin{proposition}\label{weights-degree-two}
Let $G$ be a connected tree. For each $f\in KE(G)$ we obtain that 
$$
|V_{T}(f)|=(q-1)^{s-1}-(q-1)^{s-2}+\cdots+(-1)^{r}(q-1)^{s-(r-1)}.
$$
where $r$ is the number of monomials in $f$.
\end{proposition}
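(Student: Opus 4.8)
The plan is to compute $|V_T(f)|$ directly by induction on the number $r$ of monomials of $f$, using that these monomials are quadratic edge monomials whose supporting edges form a \emph{forest}. A general element of $KE(G)$ has the form $f = \sum_{i=1}^{r}\lambda_i t_{a_i}t_{b_i}$ with $\lambda_i \in K^*$, where each $\{a_i,b_i\}$ is an edge of $G$; the $r$ edges appearing constitute a subgraph of the tree $G$, and since every subgraph of a tree is acyclic, they form a forest $F$ with $r$ edges. Because only this forest structure is used, I would phrase the induction for an arbitrary forest on an $s$-element vertex set, abbreviating the target value as $N(s,r) := \sum_{k=1}^{r-1}(-1)^{k-1}(q-1)^{s-k}$. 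This forest structure is exactly where the tree hypothesis enters.

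For the base case $r=1$ the polynomial $f = \lambda_1 t_{a_1}t_{b_1}$ is a single monomial, nonvanishing everywhere on $T=(K^*)^s$, so $|V_T(f)| = 0 = N(s,1)$ (an empty sum). For the inductive step I would select a leaf $v$ of $F$, with unique incident edge $e=\{v,w\}$; such a leaf exists since any forest with at least one edge has a vertex of degree one. As $v$ meets no other edge of $F$, the variable $t_v$ occurs in precisely one monomial of $f$, so I can write $f = \lambda_e\, t_v t_w + g$, where $g$ collects the remaining $r-1$ monomials and is free of $t_v$.

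I would then stratify the count by the values of the $s-1$ variables other than $t_v$, ranging over $(K^*)^{s-1}$. For a fixed such assignment: if $g\neq 0$ there, the equation $f=0$ determines the single value $t_v = -g/(\lambda_e t_w) \in K^*$ (legitimate since $\lambda_e\neq 0$ and $t_w\in K^*$), contributing one zero; if $g=0$ there, then $f=\lambda_e t_v t_w$ never vanishes on the torus, contributing none. Hence $|V_T(f)| = (q-1)^{s-1} - |V_{T'}(g)|$, where $T'=(K^*)^{s-1}$. The polynomial $g$ has $r-1$ monomials supported on the forest $F\setminus\{e\}$, so the induction hypothesis gives $|V_{T'}(g)| = N(s-1,r-1)$.

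It then remains to verify the arithmetic identity $N(s,r) = (q-1)^{s-1} - N(s-1,r-1)$. Reindexing the sum defining $N(s-1,r-1)$ by $k\mapsto k+1$ converts the subtracted terms into $\sum_{k=2}^{r-1}(-1)^{k-1}(q-1)^{s-k}$, which combines with the leading term $(q-1)^{s-1}$ to give exactly $N(s,r)$, closing the induction. The sole genuine difficulty is conceptual rather than computational: one must notice that the tree (hence forest) hypothesis guarantees a leaf at every stage, so that some variable always appears in a single monomial and can be solved for uniquely --- precisely the property that fails for a graph with cycles, such as a triangle. The sign bookkeeping is routine, and the outcome makes clear that $|V_T(f)|$ depends only on $r$, reproducing the degree-one formula of Proposition \ref{weights-degree-one}.
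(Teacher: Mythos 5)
Your proof is correct and follows essentially the same route as the paper's: induct on the number of monomials, use acyclicity to find a leaf variable occurring in a single edge monomial, solve for it over the torus, and reduce to a polynomial with $r-1$ monomials in $s-1$ variables via $|V_T(f)|=(q-1)^{s-1}-|V_{T'}(g)|$. Your version is in fact slightly tidier, since you make explicit the generalization to forests (needed for the induction to be well-founded as $s$ drops) and start the base case at $r=1$ rather than $r=2$.
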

\begin{proof}


We prove this by induction on $r$. If $r=2$, there are two possible structures for $f$
$$
t_{j_1}(\alpha_{j_2}t_{j_2}-\alpha_{j_3}t_{j_3}),\quad \alpha_{j_2}t_{j_1}t_{j_2}-\alpha_{j_3}t_{j_3}t_{j_4}.
$$
It is not difficult to realize that the number of zeros on $T$ of each one of these polynomials is equal to $(q-1)^{s-1}$. This establish the first step of the induction. Now suppose that the statement is true for $r-1$, and let's prove for $r$. Given the graph structure of a tree, there are not cycles, then we have that $f$ contains a variable  $t_i$ that appears only in one monomial $t_it_j$ of $f$, so the equation $f=0$ is equivalent to a equation $t_i=\frac{h}{t_j}$, where the polynimial $h\in KE(G)$ contains all the others monomials in $f$, which describe edge of the tree $G$. The number of solution on $T$ of the equation $f=0$ is given by the number of possible non-zero values of the variable $t_i$, this is 
$$
|V_{T}(f)|=(q-1)^{s-1}-|V_{T^{*}}(h)|,
$$
where $T^{*}$ is the affine torus in $s-1$ dimensions. Since $h$ has $r-1$ monomials, then by induction hypothesis we obtain that 
$$
|V_{T}(f)|=(q-1)^{s-1}-((q-1)^{s-1-1}+\cdots+(-1)^{r-1}(q-1)^{s-1-(r-2)}),
$$
which is the result that we wanted.
\end{proof}


After obtaining a characterization of polynomials in $KE(G)$ when $G$ is a tree, we are now prepared to present the weight distribution for edge codes of trees.

\begin{theorem}\label{theo-weight-dist}
Let $C_G$ be the edge code of connected tree $G$ . For $1\leq t\leq (s-1)/2,$ the $t$-th Hamming weight of $C_G$ is given by
$$
(q-1)^{s}-(q-1)^{s-1}+(q-1)^{s-2}+\cdots+(-1)^{2t+1}(q-1)^{s-(2t-1)}.
$$
\end{theorem}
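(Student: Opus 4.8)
The plan is to reduce everything to the zero-count formula already established in Proposition \ref{weights-degree-two}. First I would observe that for a nonzero $f\in KE(G)$ with exactly $r$ monomials, the weight of the codeword $\operatorname{ev}(f)$ is $|T|-|V_T(f)|=(q-1)^s-|V_T(f)|$ by Remark \ref{chi_zeros}, and Proposition \ref{weights-degree-two} shows this quantity depends only on $r$, not on which edges are used. Writing this weight as
$$
w_r:=(q-1)^s-\sum_{j=1}^{r-1}(-1)^{j+1}(q-1)^{s-j},
$$
the set of weights occurring in $C_G$ is exactly $\{w_r : 1\leq r\leq s-1\}$: a tree on $s$ vertices has $s-1$ edges, and choosing any $r$ of them with nonzero coefficients yields a codeword with $r$ monomials, since distinct edges give distinct squarefree degree-two monomials. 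In particular the $t$-th Hamming weight is the $t$-th smallest value among the $w_r$, so the theorem reduces to identifying that value.

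Next I would analyze the ordering of the $w_r$. The single-step difference telescopes to
$$
w_{r+1}-w_r=(-1)^r(q-1)^{s-r},
$$
which shows the even-indexed subsequence $w_2,w_4,w_6,\dots$ is strictly increasing while the odd-indexed subsequence $w_1,w_3,w_5,\dots$ is strictly decreasing. Summing two consecutive steps gives $w_{2a+2}-w_{2a}=(q-2)(q-1)^{s-2a-1}>0$ and $w_{2b+1}-w_{2b-1}=-(q-2)(q-1)^{s-2b}<0$, where the hypothesis $q\geq 3$ (so that $q-2\geq 1$) is exactly what forces the strict inequalities.

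Then I would show that every even-indexed weight lies strictly below every odd-indexed weight. Since the even weights increase and the odd weights decrease, it suffices to compare the largest even weight with the smallest odd weight; examining the two cases ($s$ even, $s$ odd) reduces this to the adjacent comparison $w_{r}<w_{r\pm1}$, which again follows from $w_{r+1}-w_r=(-1)^r(q-1)^{s-r}$ and $q\geq 3$. Hence the sorted list of distinct weights begins $w_2<w_4<w_6<\cdots$, so the $t$-th smallest weight is $w_{2t}$ for every $t$ with $2t\leq s-1$, i.e.\ $1\leq t\leq (s-1)/2$. Finally I would simplify $w_{2t}$: expanding the defining sum and distributing the sign, the coefficient of $(q-1)^{s-j}$ equals $(-1)^j$, with the last term $j=2t-1$ carrying sign $(-1)^{2t-1}=(-1)^{2t+1}=-1$, which is precisely the stated expression. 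The main obstacle is the ordering step: one must verify carefully that the even-indexed weights are genuinely the smallest ones and interleave below the odd-indexed weights, and this is exactly where $q\geq 3$ is used; the remainder is bookkeeping built on Proposition \ref{weights-degree-two}.
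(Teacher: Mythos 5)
Your proposal is correct and follows essentially the same route as the paper: both reduce to Proposition \ref{weights-degree-two}, observe that the weight of a codeword depends only on the number $r$ of monomials, and then order the resulting values by parity of $r$ (even-indexed weights increasing, odd-indexed decreasing, evens below odds) to conclude that the $t$-th Hamming weight is the one coming from $r=2t$. Your telescoping of $w_{r+1}-w_r$ and the explicit remark that $r$ ranges over $1,\dots,s-1$ because a tree has $s-1$ edges are just slightly more detailed versions of the three inequalities the paper checks.
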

\begin{proof}
From Proposition \ref{weights-degree-two},
$$
|V_{T}(f_{m})|=(q-1)^{s-1}-(q-1)^{s-2}+\cdots+(-1)^{m}(q-1)^{s-(m-1)},
$$
where $f_{m}=a_{1}t_{i_{1}}t_{j_{1}}+\cdots+a_{m}t_{i_{m}}t_{j_{m}}\in KE(G)$ is a homogeneous squarefree polynomial of degree two with monomials given by the edges of $G$. 
If $1\leq t\leq (s-1)/2$, notice that 
\begin{align*}
|V_{T}(f_{2(t+1)})|&=|V_{T}(f_{2t})|-(q-1)^{s-(2t)}+(q-1)^{s-(2t+1)}<|V_{T}(f_{2t})|,\\
|V_{T}(f_{2t+1})|&=|V_{T}(f_{2t})|-(q-1)^{s-(2t)}<|V_{T}(f_{2t})|, \text{ and }\\
|V_{T}(f_{2t+1})|&=|V_{T}(f_{2t})|-(q-1)^{s-(2t)}\\
&<|V_{T}(f_{2t})|-(q-1)^{s-(2t)}+(q-1)^{s-(2t+1)}-(q-1)^{s-(2t+2)}=|V_{T}(f_{2t+3})|.\\
\end{align*}
This implies that the weight distribution of $C_G$ is given by the even values of $m.$ In other words, the $t$-th Hamming weight is 
\begin{align*}
|T|-&\left( (q-1)^{s-1}+(q-1)^{s-2}+\cdots+(-1)^{2t}(q-1)^{s-(2t-1)}\right)\\
=&(q-1)^{s}-(q-1)^{s-1}+(q-1)^{s-2}+\cdots+(-1)^{2t+1}(q-1)^{s-(2t-1)}.
\end{align*}
Thus, we obtain the result.
\end{proof}

\section{Examples}\label{Examples}
In this section we present specific calculations for the minimum distance of edge codes of some special graphs. All these results can be implemented using the Coding Theory package for {\it Macaulay 2} \cite{M2,CodPack}.
\begin{example}
Consider the path $P_4$ with four vertices
\begin{figure}[H]
\centering
\begin{tikzpicture}[line width=.5pt,scale=0.75]
		\tikzstyle{every node}=[inner sep=1pt, minimum width=5.5pt] 
\tiny{
\node (1) at (-1,1){$\bullet$};
\node (4) at (1,1) {$\bullet$};
\node (2) at (-1,-1) {$\bullet$};
\node (3) at (1,-1) {$\bullet$};
\node at (-1,1.3){$1$};
\node at (1,1.3) {$4$};
\node at (-1,-1.3){$2$};
\node at (1,-1.3) {$3$};
\draw[-,line width=1pt] (1) to (2);
\draw[-,line width=1pt] (2) to (3);
\draw[-,line width=1pt] (3) -- (4);
}
\end{tikzpicture}
\caption{}\label{Example3}
\end{figure}
The set of edges of this graph is 
$$
E=\{t_1t_2,t_2t_3,t_3t_4\}. 
$$
The following table contains all different types of polynomials in $KE(P_4)$ and the number of zeros on the torus,

\begin{table}[H]
\begin{tabular}{ |c|c| } 
 \hline
 $f$ & $|V_T(f)|$ \\ 
 \hline
 $t_it_j$ & $0$ \\ 
 \hline
 $(t_i+t_j)t_k$ & $(q-1)^3$ \\ 
 \hline
 $t_it_j+t_kt_h$ & $(q-1)^3$ \\ 
 \hline
 $t_it_j+t_jt_k+t_kt_h$ & $(q-1)^2(q-2)$ \\ 
 \hline
\end{tabular}
\caption{}
\label{table_path}
\end{table}
In particular, consider the polynomial $f=t_it_j+t_kt_h$. A point $(P_1,P_2,P_3,P_4)\in V_{T}(f)$ if and only if it is a solution on the torus to the equation 
$$t_it_j+t_kt_h=0,$$
And this is equivalent to $t_{i}=-\cfrac{t_kt_h}{t_j}$. The number of solution of this last  equation on the torus is equal to number of possible non-zero values that the variables $t_k$, $t_h$, and $t_j$ can take. This number is equal to $(q-1)^3$. Remark \ref{chi_zeros} implies that the minimum distance of the code $C_{P_4}$ is equal to $(q-1)^4-(q-1)^3$. We point out that this code is out of the scope of Theorem \ref{clutter_theorem} because the graph does not contain a cycle of length $4$.
\end{example}

\begin{example}
Let's compute the minimum distance of a edge code of a cycle with five vertices.
\begin{figure}[H]
\centering
\begin{tikzpicture}[line width=.5pt,scale=0.75]
\tikzstyle{every node}=[inner sep=1pt, minimum width=5.5pt] 
\tiny{
\node (1) at (0,1){$\bullet$};
\node (2) at (1,0) {$\bullet$};
\node (3) at (.5,-1) {$\bullet$};
\node (4) at (-.5,-1){$\bullet$};
\node (5) at (-1,0) {$\bullet$};
\node at (0,1.2){$1$};
\node at (1.2,0) {$2$};
\node at  (.5,-1.2){$3$};
\node at (-.5,-1.2){$4$};
\node at (-1.2,0){$5$};
\draw[-,line width=1pt] (1) to (2);
\draw[-,line width=1pt] (1) to (5);
\draw[-,line width=1pt] (2) -- (3);
\draw[-,line width=1pt] (3) -- (4);
\draw[-,line width=1pt] (4) -- (5);
}
\end{tikzpicture}
\caption{}\label{Example3}
\end{figure}
Similarly to the previous example we present a table with the different types of polynomials in $KE(G)$ and their respective number of zeros on the affine torus. 
\begin{table}[H]
\begin{tabular}{|c|c|c|} 
 \hline
 $f$ & Type&$|V_T(f)|$\\ 
 \hline
 $t_1t_2$ & Monomial & $0$\\ 
 \hline
 $(t_1+t_3)t_2$ & Path length $2$ & $(q-1)^4$\\ 
 \hline
 $t_1t_2+t_4t_5$ & Two not connected edges &$(q-1)^4$ \\ 
 \hline
 $t_1t_2+t_2t_3+t_3t_4$ &  Path length $3$ &$(q-1)^4-(q-1)^3$\\ 
  \hline
 $t_1t_2+t_2t_3+t_4t_5 $&  Path plus edge &$(q-1)^4-(q-1)^3$ \\ 
 \hline
  $t_1t_2+t_2t_3+t_3t_4+t_4t_5$ &  Path length $4$ &$(q-1)^4-(q-1)^3+(q-1)^2$\\ 
 \hline
   $t_1t_2+t_2t_3+t_3t_4+t_4t_5+t_5t_1$ & Cycle &$\leq (q-1)^4$\\ 
 \hline

\end{tabular}
\caption{}
\label{table_cycle}
\end{table}

The number of zeros of the polynomial $t_1t_2+t_2t_3+t_3t_4$ is equal to the number of solutions of the equation 
$$
t_1=\cfrac{-(t_2t_3+t_3t_4)}{t_2}=\cfrac{-t_3(t_2+t_4)}{t_2}
$$
on the affine torus. This number of solution is determined by the number of possible non-zero values that the variables $t_2$, $t_3$, $t_4$, and $t_5$ can take. With these possible values  we can compute the value of $t_1$ that give us a solution of the polynomial. Since we are on the torus, $t_1$ should be non-zero, so we can assume that $t_4\neq -t_2$. Then, the number of possible values of the variables $t_2$, $t_3$, and $t_5$ is $(q-1)$ and for $t_4$ is $(q-2)$. Thus, the number of solutions is $(q-1)^3(q-2)=(q-1)^4-(q-1)^3$.\\
Furthermore, we conclude that $\max_{f\in KE(G)}|V_T(f)|=(q-1)^4$, thus the minimum distance of the edge code is given by $\delta(C_G)=|T|-\max_{f\in KE(G)}|V_T(f)|=(q-1)^5-(q-1)^4=(q-1)^4(q-2)$.
\end{example}

\begin{example}

Following the ideas of the two previous examples, in Table \ref{five_vertices} we present the minimum distance of the edge code $C_G$ for all connected graphs $G$ with five vertices.
\begin{table}[H]
\begin{tabular}{|c|c|c|c|c|c|}
\cline{3-3}  
\multicolumn{1}{c}{}& &
\begin{tikzpicture}[line width=.5pt,scale=0.75]
\tikzstyle{every node}=[inner sep=1pt, minimum width=5.5pt] 
\tiny{
\node (1) at (0,1){$\bullet$};
\node (2) at (1,0) {$\bullet$};
\node (3) at (.5,-1) {$\bullet$};
\node (4) at (-.5,-1){$\bullet$};
\node (5) at (-1,0) {$\bullet$};
\node at (0,-1.7){${\delta(C_{G})=(q-2)^2(q-1)^3}$};
\node at (0,1.2){$1$};
\node at (1.2,0) {$2$};
\node at  (.5,-1.2){$3$};
\node at (-.5,-1.2){$4$};
\node at (-1.2,0){$5$};
\draw[-,line width=1pt] (1) to (2);
\draw[-,line width=1pt] (1) to (3);
\draw[-,line width=1pt] (1) -- (4);
\draw[-,line width=1pt] (1) -- (5);
\draw[-,line width=1pt] (2) -- (3);
\draw[-,line width=1pt] (2) -- (4);
\draw[-,line width=1pt] (2) -- (5);
\draw[-,line width=1pt] (3) -- (4);
\draw[-,line width=1pt] (3) -- (5);
\draw[-,line width=1pt] (4) -- (5);
}
\end{tikzpicture}\\
\hline \begin{tikzpicture}[line width=.5pt,scale=0.75]
\tikzstyle{every node}=[inner sep=1pt, minimum width=5.5pt] 
\tiny{
\node (1) at (0,1){$\bullet$};
\node (2) at (1,0) {$\bullet$};
\node (3) at (.5,-1) {$\bullet$};
\node (4) at (-.5,-1){$\bullet$};
\node (5) at (-1,0) {$\bullet$};
\node at (0,-1.7){${\delta(C_{G})=(q-2)(q-1)^4}$};
\node at (0,1.2){$1$};
\node at (1.2,0) {$2$};
\node at  (.5,-1.2){$3$};
\node at (-.5,-1.2){$4$};
\node at (-1.2,0){$5$};
\draw[-,line width=1pt] (1) to (2);
\draw[-,line width=1pt] (1) to (5);
\draw[-,line width=1pt] (2) -- (3);
\draw[-,line width=1pt] (3) -- (4);
\draw[-,line width=1pt] (4) -- (5);
}
\end{tikzpicture}&\begin{tikzpicture}[line width=.5pt,scale=0.75]
\tikzstyle{every node}=[inner sep=1pt, minimum width=5.5pt] 
\tiny{
\node (1) at (0,1){$\bullet$};
\node (2) at (1,0) {$\bullet$};
\node (3) at (.5,-1) {$\bullet$};
\node (4) at (-.5,-1){$\bullet$};
\node (5) at (-1,0) {$\bullet$};
\node at (0,-1.7){${\delta(C_{G})=(q-2)^2(q-1)^3}$};
\node at (0,1.2){$1$};
\node at (1.2,0) {$2$};
\node at  (.5,-1.2){$3$};
\node at (-.5,-1.2){$4$};
\node at (-1.2,0){$5$};
\draw[-,line width=1pt] (1) to (2);
\draw[-,line width=1pt] (1) to (3);
\draw[-,line width=1pt] (1) -- (4);
\draw[-,line width=1pt] (1) -- (5);
\draw[-,line width=1pt] (2) -- (3);
\draw[-,line width=1pt] (2) -- (5);
\draw[-,line width=1pt] (3) -- (4);
\draw[-,line width=1pt] (4) -- (5);
}
\end{tikzpicture}&
\begin{tikzpicture}[line width=.5pt,scale=0.75]
\tikzstyle{every node}=[inner sep=1pt, minimum width=5.5pt] 
\tiny{
\node (1) at (0,1){$\bullet$};
\node (2) at (1,0) {$\bullet$};
\node (3) at (.5,-1) {$\bullet$};
\node (4) at (-.5,-1){$\bullet$};
\node (5) at (-1,0) {$\bullet$};
\node at (0,-1.7){${\delta(C_{G})=(q-2)^2(q-1)^3}$};
\node at (0,1.2){$1$};
\node at (1.2,0) {$2$};
\node at  (.5,-1.2){$3$};
\node at (-.5,-1.2){$4$};
\node at (-1.2,0){$5$};
\draw[-,line width=1pt] (1) to (3);
\draw[-,line width=1pt] (1) -- (5);
\draw[-,line width=1pt] (2) -- (3);
\draw[-,line width=1pt] (2) -- (5);
\draw[-,line width=1pt] (3) -- (4);
\draw[-,line width=1pt] (4) -- (5);
}
\end{tikzpicture}&\begin{tikzpicture}[line width=.5pt,scale=0.75]
\tikzstyle{every node}=[inner sep=1pt, minimum width=5.5pt] 
\tiny{
\node (1) at (0,1){$\bullet$};
\node (2) at (1,0) {$\bullet$};
\node (3) at (.5,-1) {$\bullet$};
\node (4) at (-.5,-1){$\bullet$};
\node (5) at (-1,0) {$\bullet$};
\node at (0,-1.7){${\delta(C_{G})=(q-2)^2(q-1)^3}$};
\node at (0,1.2){$1$};
\node at (1.2,0) {$2$};
\node at  (.5,-1.2){$3$};
\node at (-.5,-1.2){$4$};
\node at (-1.2,0){$5$};
\draw[-,line width=1pt] (1) to (2);
\draw[-,line width=1pt] (1) -- (5);
\draw[-,line width=1pt] (2) -- (3);
\draw[-,line width=1pt] (2) -- (4);
\draw[-,line width=1pt] (2) -- (5);
\draw[-,line width=1pt] (3) -- (5);
\draw[-,line width=1pt] (4) -- (5);
}
\end{tikzpicture}&\begin{tikzpicture}[line width=.5pt,scale=0.75]
\tikzstyle{every node}=[inner sep=1pt, minimum width=5.5pt] 
\tiny{
\node (1) at (0,1){$\bullet$};
\node (2) at (1,0) {$\bullet$};
\node (3) at (.5,-1) {$\bullet$};
\node (4) at (-.5,-1){$\bullet$};
\node (5) at (-1,0) {$\bullet$};
\node at (0,-1.7){${\delta(C_{G})=(q-2)^2(q-1)^3}$};
\node at (0,1.2){$2$};
\node at (1.2,0) {$1$};
\node at  (.5,-1.2){$3$};
\node at (-.5,-1.2){$4$};
\node at (-1.2,0){$5$};
\draw[-,line width=1pt] (1) to (2);
\draw[-,line width=1pt] (1) to (3);
\draw[-,line width=1pt] (1) -- (4);
\draw[-,line width=1pt] (1) -- (5);
\draw[-,line width=1pt] (2) -- (3);
\draw[-,line width=1pt] (2) -- (4);
\draw[-,line width=1pt] (3) -- (4);
\draw[-,line width=1pt] (3) -- (5);
\draw[-,line width=1pt] (4) -- (5);
}
\end{tikzpicture}\\
\hline 
\begin{tikzpicture}[line width=.5pt,scale=0.75]
\tikzstyle{every node}=[inner sep=1pt, minimum width=5.5pt] 
\tiny{
\node (1) at (0,1){$\bullet$};
\node (2) at (1,0) {$\bullet$};
\node (3) at (.5,-1) {$\bullet$};
\node (4) at (-.5,-1){$\bullet$};
\node (5) at (-1,0) {$\bullet$};
\node at (0,-1.7){${\delta(C_{G})=(q-2)(q-1)^4}$};
\node at (0,1.2){$1$};
\node at (1.2,0) {$2$};
\node at  (.5,-1.2){$3$};
\node at (-.5,-1.2){$4$};
\node at (-1.2,0){$5$};
\draw[-,line width=1pt] (1) to (2);
\draw[-,line width=1pt] (1) to (3);
\draw[-,line width=1pt] (1) -- (4);
\draw[-,line width=1pt] (1) -- (5);
}
\end{tikzpicture}&\begin{tikzpicture}[line width=.5pt,scale=0.75]
\tikzstyle{every node}=[inner sep=1pt, minimum width=5.5pt] 
\tiny{
\node (1) at (0,1){$\bullet$};
\node (2) at (1,0) {$\bullet$};
\node (3) at (.5,-1) {$\bullet$};
\node (4) at (-.5,-1){$\bullet$};
\node (5) at (-1,0) {$\bullet$};
\node at (0,-1.7){${\delta(C_{G})=(q-2)^2(q-1)^3}$};
\node at (0,1.2){$1$};
\node at (1.2,0) {$5$};
\node at  (.5,-1.2){$4$};
\node at (-.5,-1.2){$3$};
\node at (-1.2,0){$2$};
\draw[-,line width=1pt] (1) -- (5);
\draw[-,line width=1pt] (2) -- (3);
\draw[-,line width=1pt] (2) -- (4);
\draw[-,line width=1pt] (2) -- (5);
\draw[-,line width=1pt] (3) -- (4);
\draw[-,line width=1pt] (3) -- (5);
\draw[-,line width=1pt] (4) -- (5);
}
\end{tikzpicture}&\begin{tikzpicture}[line width=.5pt,scale=0.75]
\tikzstyle{every node}=[inner sep=1pt, minimum width=5.5pt] 
\tiny{
\node (1) at (0,1){$\bullet$};
\node (2) at (1,0) {$\bullet$};
\node (3) at (.5,-1) {$\bullet$};
\node (4) at (-.5,-1){$\bullet$};
\node (5) at (-1,0) {$\bullet$};
\node at (0,-1.7){${\delta(C_{G})=(q-2)^2(q-1)^3}$};
\node at (0,1.2){$1$};
\node at (1.2,0) {$3$};
\node at  (.5,-1.2){$4$};
\node at (-.5,-1.2){$5$};
\node at (-1.2,0){$2$};
\draw[-,line width=1pt] (1) to (2);
\draw[-,line width=1pt] (1) -- (5);
\draw[-,line width=1pt] (2) -- (3);
\draw[-,line width=1pt] (2) -- (4);
\draw[-,line width=1pt] (2) -- (5);
\draw[-,line width=1pt] (3) -- (4);
\draw[-,line width=1pt] (3) -- (5);
\draw[-,line width=1pt] (4) -- (5);
}
\end{tikzpicture}&
\begin{tikzpicture}[line width=.5pt,scale=0.75]
\tikzstyle{every node}=[inner sep=1pt, minimum width=5.5pt] 
\tiny{
\node (1) at (0,1){$\bullet$};
\node (2) at (1,0) {$\bullet$};
\node (3) at (.5,-1) {$\bullet$};
\node (4) at (-.5,-1){$\bullet$};
\node (5) at (-1,0) {$\bullet$};
\node at (0,-1.7){${\delta(C_{G})=(q-2)(q-1)^4}$};
\node at (0,1.2){$1$};
\node at (1.2,0) {$2$};
\node at  (.5,-1.2){$3$};
\node at (-.5,-1.2){$4$};
\node at (-1.2,0){$5$};
\draw[-,line width=1pt] (1) to (2);
\draw[-,line width=1pt] (1) -- (5);
\draw[-,line width=1pt] (1) -- (3);
\draw[-,line width=1pt] (4) -- (5);
}
\end{tikzpicture}&\begin{tikzpicture}[line width=.5pt,scale=0.75]
\tikzstyle{every node}=[inner sep=1pt, minimum width=5.5pt] 
\tiny{
\node (1) at (0,1){$\bullet$};
\node (2) at (1,0) {$\bullet$};
\node (3) at (.5,-1) {$\bullet$};
\node (4) at (-.5,-1){$\bullet$};
\node (5) at (-1,0) {$\bullet$};
\node at (0,-1.7){${\delta(C_{G})=(q-2)(q-1)^4}$};
\node at (0,1.2){$2$};
\node at (1.2,0) {$1$};
\node at  (.5,-1.2){$5$};
\node at (-.5,-1.2){$4$};
\node at (-1.2,0){$3$};
\draw[-,line width=1pt] (1) to (2);
\draw[-,line width=1pt] (1) -- (5);
\draw[-,line width=1pt] (2) -- (5);
\draw[-,line width=1pt] (3) -- (4);
\draw[-,line width=1pt] (3) -- (5);
\draw[-,line width=1pt] (4) -- (5);
}
\end{tikzpicture}\\
\hline \begin{tikzpicture}[line width=.5pt,scale=0.75]
\tikzstyle{every node}=[inner sep=1pt, minimum width=5.5pt] 
\tiny{
\node (1) at (0,1){$\bullet$};
\node (2) at (1,0) {$\bullet$};
\node (3) at (.5,-1) {$\bullet$};
\node (4) at (-.5,-1){$\bullet$};
\node (5) at (-1,0) {$\bullet$};
\node at (0,-1.7){${\delta(C_{G})=(q-2)^2(q-1)^3}$};
\node at (0,1.2){$1$};
\node at (1.2,0) {$3$};
\node at  (.5,-1.2){$5$};
\node at (-.5,-1.2){$4$};
\node at (-1.2,0){$2$};
\draw[-,line width=1pt] (1) to (2);
\draw[-,line width=1pt] (1) -- (5);
\draw[-,line width=1pt] (2) -- (3);
\draw[-,line width=1pt] (2) -- (4);
\draw[-,line width=1pt] (2) -- (5);
\draw[-,line width=1pt] (3) -- (4);
\draw[-,line width=1pt] (4) -- (5);
}
\end{tikzpicture}&
\begin{tikzpicture}[line width=.5pt,scale=0.75]
\tikzstyle{every node}=[inner sep=1pt, minimum width=5.5pt] 
\tiny{
\node (1) at (0,1){$\bullet$};
\node (2) at (1,0) {$\bullet$};
\node (3) at (.5,-1) {$\bullet$};
\node (4) at (-.5,-1){$\bullet$};
\node (5) at (-1,0) {$\bullet$};
\node at (0,-1.7){${\delta(C_{G})=(q-2)(q-1)^4}$};
\node at (0,1.2){$1$};
\node at (1.2,0) {$2$};
\node at  (.5,-1.2){$3$};
\node at (-.5,-1.2){$4$};
\node at (-1.2,0){$5$};
\draw[-,line width=1pt] (1) -- (5);
\draw[-,line width=1pt] (2) -- (5);
\draw[-,line width=1pt] (3) -- (4);
\draw[-,line width=1pt] (3) -- (5);
\draw[-,line width=1pt] (4) -- (5);
}
\end{tikzpicture}&\begin{tikzpicture}[line width=.5pt,scale=0.75]
\tikzstyle{every node}=[inner sep=1pt, minimum width=5.5pt] 
\tiny{
\node (1) at (0,1){$\bullet$};
\node (2) at (1,0) {$\bullet$};
\node (3) at (.5,-1) {$\bullet$};
\node (4) at (-.5,-1){$\bullet$};
\node (5) at (-1,0) {$\bullet$};
\node at (0,-1.7){${\delta(C_{G})=(q-2)(q-1)^4}$};
\node at (0,1.2){$2$};
\node at (1.2,0) {$1$};
\node at  (.5,-1.2){$3$};
\node at (-.5,-1.2){$5$};
\node at (-1.2,0){$4$};
\draw[-,line width=1pt] (1) -- (5);
\draw[-,line width=1pt] (2) -- (3);
\draw[-,line width=1pt] (3) -- (4);
\draw[-,line width=1pt] (3) -- (5);
\draw[-,line width=1pt] (4) -- (5);
}
\end{tikzpicture}&\begin{tikzpicture}[line width=.5pt,scale=0.75]
\tikzstyle{every node}=[inner sep=1pt, minimum width=5.5pt] 
\tiny{
\node (1) at (0,1){$\bullet$};
\node (2) at (1,0) {$\bullet$};
\node (3) at (.5,-1) {$\bullet$};
\node (4) at (-.5,-1){$\bullet$};
\node (5) at (-1,0) {$\bullet$};
\node at (0,-1.7){${\delta(C_{G})=(q-2)^2(q-1)^3}$};
\node at (0,1.2){$5$};
\node at (1.2,0) {$3$};
\node at  (.5,-1.2){$1$};
\node at (-.5,-1.2){$2$};
\node at (-1.2,0){$4$};
\draw[-,line width=1pt] (1) -- (5);
\draw[-,line width=1pt] (2) -- (3);
\draw[-,line width=1pt] (2) -- (4);
\draw[-,line width=1pt] (2) -- (5);
\draw[-,line width=1pt] (3) -- (4);
\draw[-,line width=1pt] (4) -- (5);
}
\end{tikzpicture}&
\begin{tikzpicture}[line width=.5pt,scale=0.75]
\tikzstyle{every node}=[inner sep=1pt, minimum width=5.5pt] 
\tiny{
\node (1) at (0,1){$\bullet$};
\node (2) at (1,0) {$\bullet$};
\node (3) at (.5,-1) {$\bullet$};
\node (4) at (-.5,-1){$\bullet$};
\node (5) at (-1,0) {$\bullet$};
\node at (0,-1.7){${\delta(C_{G})=(q-2)^2(q-1)^3}$};
\node at (0,1.2){$1$};
\node at (1.2,0) {$2$};
\node at  (.5,-1.2){$3$};
\node at (-.5,-1.2){$4$};
\node at (-1.2,0){$5$};
\draw[-,line width=1pt] (1) -- (5);
\draw[-,line width=1pt] (2) -- (3);
\draw[-,line width=1pt] (2) -- (5);
\draw[-,line width=1pt] (3) -- (4);
\draw[-,line width=1pt] (3) -- (5);
\draw[-,line width=1pt] (4) -- (5);
}
\end{tikzpicture}\\
\hline \begin{tikzpicture}[line width=.5pt,scale=0.75]
\tikzstyle{every node}=[inner sep=1pt, minimum width=5.5pt] 
\tiny{
\node (1) at (0,1){$\bullet$};
\node (2) at (1,0) {$\bullet$};
\node (3) at (.5,-1) {$\bullet$};
\node (4) at (-.5,-1){$\bullet$};
\node (5) at (-1,0) {$\bullet$};
\node at (0,-1.7){${\delta(C_{G})=(q-2)(q-1)^4}$};
\node at (0,1.2){$3$};
\node at (1.2,0) {$2$};
\node at  (.5,-1.2){$1$};
\node at (-.5,-1.2){$5$};
\node at (-1.2,0){$4$};
\draw[-,line width=1pt] (1) to (2);
\draw[-,line width=1pt] (1) -- (5);
\draw[-,line width=1pt] (2) -- (3);
\draw[-,line width=1pt] (4) -- (5);
}
\end{tikzpicture}&\begin{tikzpicture}[line width=.5pt,scale=0.75]
\tikzstyle{every node}=[inner sep=1pt, minimum width=5.5pt] 
\tiny{
\node (1) at (0,1){$\bullet$};
\node (2) at (1,0) {$\bullet$};
\node (3) at (.5,-1) {$\bullet$};
\node (4) at (-.5,-1){$\bullet$};
\node (5) at (-1,0) {$\bullet$};
\node at (0,-1.7){${\delta(C_{G})=(q-2)(q-1)^4}$};
\node at (0,1.2){$2$};
\node at (1.2,0) {$1$};
\node at  (.5,-1.2){$4$};
\node at (-.5,-1.2){$5$};
\node at (-1.2,0){$3$};
\draw[-,line width=1pt] (1) to (2);
\draw[-,line width=1pt] (1) -- (5);
\draw[-,line width=1pt] (3) -- (4);
\draw[-,line width=1pt] (3) -- (5);
\draw[-,line width=1pt] (4) -- (5);
}
\end{tikzpicture}
&\begin{tikzpicture}[line width=.5pt,scale=0.75]
\tikzstyle{every node}=[inner sep=1pt, minimum width=5.5pt] 
\tiny{
\node (1) at (0,1){$\bullet$};
\node (2) at (1,0) {$\bullet$};
\node (3) at (.5,-1) {$\bullet$};
\node (4) at (-.5,-1){$\bullet$};
\node (5) at (-1,0) {$\bullet$};
\node at (0,-1.7){${\delta(C_{G})=(q-2)^2(q-1)^3}$};
\node at (0,1.2){$1$};
\node at (1.2,0) {$2$};
\node at  (.5,-1.2){$3$};
\node at (-.5,-1.2){$4$};
\node at (-1.2,0){$5$};
\draw[-,line width=1pt] (1) to (2);
\draw[-,line width=1pt] (1) to (3);
\draw[-,line width=1pt] (1) -- (5);
\draw[-,line width=1pt] (2) -- (3);
\draw[-,line width=1pt] (2) -- (5);
\draw[-,line width=1pt] (3) -- (4);
\draw[-,line width=1pt] (4) -- (5);
}
\end{tikzpicture}&\begin{tikzpicture}[line width=.5pt,scale=0.75]
\tikzstyle{every node}=[inner sep=1pt, minimum width=5.5pt] 
\tiny{
\node (1) at (0,1){$\bullet$};
\node (2) at (1,0) {$\bullet$};
\node (3) at (.5,-1) {$\bullet$};
\node (4) at (-.5,-1){$\bullet$};
\node (5) at (-1,0) {$\bullet$};
\node at (0,-1.7){${\delta(C_{G})=(q-2)^2(q-1)^3}$};
\node at (0,1.2){$1$};
\node at (1.2,0) {$2$};
\node at  (.5,-1.2){$4$};
\node at (-.5,-1.2){$5$};
\node at (-1.2,0){$3$};
\draw[-,line width=1pt] (1) to (2);
\draw[-,line width=1pt] (1) -- (5);
\draw[-,line width=1pt] (2) -- (3);
\draw[-,line width=1pt] (2) -- (5);
\draw[-,line width=1pt] (3) -- (4);
\draw[-,line width=1pt] (4) -- (5);
}
\end{tikzpicture}&\begin{tikzpicture}[line width=.5pt,scale=0.75]
\tikzstyle{every node}=[inner sep=1pt, minimum width=5.5pt] 
\tiny{
\node (1) at (0,1){$\bullet$};
\node (2) at (1,0) {$\bullet$};
\node (3) at (.5,-1) {$\bullet$};
\node (4) at (-.5,-1){$\bullet$};
\node (5) at (-1,0) {$\bullet$};
\node at (0,-1.7){${\delta(C_{G})=(q-2)^2(q-1)^3}$};
\node at (0,1.2){$1$};
\node at (1.2,0) {$2$};
\node at  (.5,-1.2){$5$};
\node at (-.5,-1.2){$4$};
\node at (-1.2,0){$3$};
\draw[-,line width=1pt] (1) -- (5);
\draw[-,line width=1pt] (2) -- (3);
\draw[-,line width=1pt] (2) -- (5);
\draw[-,line width=1pt] (3) -- (4);
\draw[-,line width=1pt] (4) -- (5);
}
\end{tikzpicture}\\
\hline
\end{tabular}
\caption{}
\label{five_vertices}
\end{table}

Notice that the edge code $C_G$ has minimum distance $\delta(C_{G})=(q-2)^2(q-1)^3$ only when the graph $G$ contains a cycle of length $4$ or equivalent in terms of Theorem \ref{clutter_theorem} when $\mathfrak{C}_{1}(J_{2\times2})\subseteq G$.

\end{example}

\bibliographystyle{alpha}
\bibliography{References}

\end{document}